\documentclass[11pt]{article}
\usepackage[T1]{fontenc}
\usepackage[utf8]{inputenc}
\usepackage{lmodern,amsmath,amssymb,amsthm,mathtools,booktabs}
\usepackage[margin=1.1in]{geometry}
\usepackage[colorlinks=true,linkcolor=blue,citecolor=blue,urlcolor=blue]{hyperref}
\usepackage{microtype}
\hypersetup{pdftitle={Coordinate descents, monodromy, and finite normalization of marked-root maps},
  pdfauthor={Jos\'e A. R. Fonollosa},
  pdfkeywords={Keller maps, Jacobian conjecture, coordinate descent, monodromy, finite normalization}}
\newtheorem{theorem}{Theorem}[section]
\newtheorem{proposition}[theorem]{Proposition}
\newtheorem{lemma}[theorem]{Lemma}
\newtheorem{corollary}[theorem]{Corollary}
\theoremstyle{definition}

\theoremstyle{remark}
\newtheorem{remark}[theorem]{Remark}
\newcommand{\A}{\mathbb A}
\newcommand{\C}{\mathbb C}
\newcommand{\Z}{\mathbb Z}
\newcommand{\Gm}{\mathbb G_m}
\newcommand{\PP}{\mathbb P}

\DeclareMathOperator{\Gal}{Gal}
\DeclareMathOperator{\Pic}{Pic}
\DeclareMathOperator{\sgn}{sgn}
\DeclareMathOperator{\disc}{disc}

\title{Coordinate descents, monodromy, and finite normalization\\
of marked-root maps}
\author{Jos\'e A. R. Fonollosa\thanks{Universitat Polit\`ecnica de Catalunya, Barcelona, Spain.
  \texttt{jose.fonollosa@upc.edu}. ORCID:
  \href{https://orcid.org/0000-0001-9513-7939}{0000-0001-9513-7939}.}}
\date{September 2026}
\begin{document}
\maketitle
\begin{abstract}
We study the dimension-indexed marked-root polynomial maps introduced by
Harish. For every $n\ge4$, their lower coefficient block extends to a
polynomial coordinate system, giving three-dimensional Keller maps of
geometric degree $n(n-2)$ for every choice of the fixed coefficients.
We determine the generic monodromy of the original maps and all successive
coordinate descents: it is the cyclic wreath product for odd $n$, and an
explicit subgroup of index two for even $n$. Its order is
$(n-2)^n n!/\gcd(2,n-2)$.
We construct the smooth finite normalization and identify its complement
of the affine source as $n-2$ disjoint affine divisors. One is ramified;
the others record unramified loss of inverse sheets. The normalization
has Picard group $\Z^{n-2}$ and the homotopy type of a wedge of $n-2$
two-spheres. Its rational deck group is cyclic, while the polynomial
deck group of the source map is trivial. By base change, the
normalization describes the preimage of any subvariety of the target;
when the target polynomial factors, a component with only constant units
is an open subset of a Kummer cover of the simple roots of one factor.
Finally, the monodromy separates these maps, for $n\ge4$, from single
weighted lifts under stable polynomial left--right equivalence.

\medskip\noindent\textbf{Keywords:} Keller maps, Jacobian conjecture,
coordinate descent, monodromy, finite normalization.

\noindent\textbf{MSC 2020:} 14R15 (primary); 14R10, 14E20, 12F10
(secondary).
\end{abstract}
\setcounter{tocdepth}{1}
{\small\tableofcontents}
\clearpage
\section{Introduction and principal results}

All varieties and function fields are over $\C$.

\subsection{Keller maps and the marked-root construction}

A \emph{Keller map} is a polynomial map $F:\A^n\to\A^n$ whose Jacobian
determinant is a nonzero constant. Such a map is a local analytic
isomorphism at every point, and the Jacobian conjecture, going back to
Keller \cite{Keller1939CremonaTransformationen}, asserts that it is then
a global polynomial automorphism; see
\cite{BassConnellWright1982DegreeReduction,VanDenEssen2000PolynomialAutomorphisms}
for background. Two numerical invariants recur throughout:
\begin{itemize}
\item the \emph{geometric degree} of $F$, which is the degree of its
function-field extension $\C(\A^n)/F^*\C(\A^n)$, or equivalently the
number of points in a generic fibre;
\item its \emph{generic monodromy}, the permutation group of the sheets
over a small open subset of the target.
\end{itemize}
The geometric degree should not be confused with the degrees of the
coordinate polynomials. A Keller map is an automorphism exactly when its
geometric degree is one.

Harish constructed, for every $n\ge3$, Keller maps $K_n:\A^n\to\A^n$ of
geometric degree $n(n-2)$ \cite{Harish2026MarkedRoot}. Since
$n(n-2)>1$, each $K_n$ is a counterexample to the Jacobian conjecture in
dimension $n$. The conjecture in dimension two remains open, and this is
what makes the lower-dimensional restrictions of $K_n$ studied here
interesting. The idea of the construction is easiest to state on the
target side.
\begin{itemize}
\item A target point is a polynomial $P(T)$ of degree at most $n$ whose
coefficient of $T$ equals one; its remaining $n$ coefficients are the
target coordinates.
\item A generic source point over $P$ is a pair $(t,a)$ consisting of a
root $t$ of $P$ and a \emph{marking} $a$ with $a^{n-2}=P'(t)$.
\end{itemize}
For a generic $P$ there are $n$ roots and $n-2$ markings for each, which
explains the degree $n(n-2)$. The source coordinates are recovered from
$(t,a)$ by explicit formulas, recalled in Section~\ref{sec:root}. The
construction, the constant Jacobian, the complete fibre formula, the
image and the nonproperness locus of $K_n$ (the set of target points
near which the map fails to be proper \cite{Jelonek1993NonProperSet})
are due to Harish; we recall the construction with full derivations to
fix conventions.

For the original cubic example, a coordinate-free recognition of the
affine source and a precise tangency criterion appear in
\cite[Appendix A]{VanDobbenDeBruyn2026ProjectiveBundles}. The present
paper concerns the whole dimension-indexed family.

\subsection{Questions addressed}

The description by roots and markings raises three natural questions.
\begin{enumerate}
\item \emph{Descent.} Fixing some target coefficients restricts $K_n$ to
a subfamily. Is each such restriction again a Keller map on an affine
space, so that one obtains Keller maps in dimension three?
\item \emph{Monodromy.} The root-and-marking description suggests the
wreath product $\mu_{n-2}\wr S_n$ as monodromy group, but the markings
may satisfy relations. What is the actual group, and is it preserved by
specialization?
\item \emph{Completion.} Rational inverse formulas describe sheets that
escape to infinity only after one knows which finite completion the
source sits in. What is the finite normalization, and which divisors
are missing from the source?
\end{enumerate}

\subsection{Notation}

Put $m=n-2$. We name the target coefficients by
\begin{equation}\label{eq:pencil}
 P(T)=qT^n+rT^{n-1}+\sum_{j=2}^{n-2}\lambda_jT^j+T-b .
\end{equation}
For $n=3$ the sum is empty, and $(r,q,b)=(A_2,C_3,A_0)$.
For $n\ge4$, Section~\ref{sec:root} writes the same coefficients as
$C_n=q$, $C_{n-1}=r$, $C_j=\lambda_j$ for $3\le j\le n-2$,
$A_2=\lambda_2$ and $A_0=b$.

The coefficients $\lambda_2,\ldots,\lambda_{n-2}$ are called the
\emph{lower coefficients}. A \emph{successive coordinate descent} fixes
a prefix $\lambda_2,\ldots,\lambda_k$ to complex values and keeps the
remaining lower coefficients and $r,q,b$ free. The empty prefix is the
original map, up to a permutation of its outputs. We write $d$ for the
dimension of the resulting target, so $3\le d\le n$. The case $d=3$,
with all lower coefficients fixed, gives maps to $\A^3_{r,q,b}$.

\subsection{Principal results}

\begin{theorem}[Polynomial coordinate descent]\label{thm:descent-main}
For every $n\ge4$, the lower coefficient functions $(A_2,C_3,\ldots,C_{n-2})$
of $K_n$ are part of a polynomial coordinate system on its source. The
coordinate changes and their inverses are constructed recursively, and
they fix the source coordinate $x$. Consequently every fixed tuple
$\lambda\in\C^{n-3}$ gives a Keller map
\[
 E_{n,\lambda}:\A^3\longrightarrow\A^3_{r,q,b}
\]
of geometric degree $nm$, in source coordinates $(x,s_{n-1},s_n)$
constructed in Section~\ref{sec:descent}. The same holds at every intermediate step of a
successive descent, for every value of the fixed coefficients.
\end{theorem}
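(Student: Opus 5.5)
The plan is to work directly with the explicit source coordinates of $K_n$ recalled in Section~\ref{sec:root}. There the lower coefficient functions are polynomials in $x$ and the remaining source variables. I will show that they are \emph{triangular}, and the key point to exploit is that each lower coefficient $C_j$ (and $A_2$) contains a distinguished source variable linearly, with a coefficient that is a unit in $\C[x]$. In the natural indexing I expect this coefficient to be a nonzero constant, or $\pm1$.

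Concretely, I will order the source variables as $x, s_2,\ldots,s_{n-2}, s_{n-1}, s_n$ so that the lower coefficient $\lambda_j$ has the form
\[
 \lambda_j = c_j\, s_j + g_j(x,s_{2},\ldots,s_{j-1}),\qquad c_j\in\C^*
\]
possibly after reversing the order or using the opposite triangularity.

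Granting this shape, the recursion is immediate. Set $s_j' = \lambda_j$ successively for $j=2,\ldots,n-2$. The inverse is $s_j = c_j^{-1}(s_j' - g_j(x,s_2,\ldots,s_{j-1}))$, where the earlier $s_i$ have already been expressed through $x$ and $s_2',\ldots,s_{j-1}'$. This is an elementary (triangular) automorphism at each step, so it fixes $x$, $s_{n-1}$ and $s_n$. The composite is therefore a polynomial automorphism of $\A^n$ taking $(A_2,C_3,\ldots,C_{n-2})$ to coordinates.

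If the dependence is not literally of this form, for instance if $\lambda_j$ involves $s_j$ times a power of $x$, I will first apply a preliminary coordinate change. I expect this to come from the marked-root formulas: the substitution expressing source coordinates as coefficients of a polynomial in $T$ divided by powers of a marking. The aim of that change is to clear the $x$-dependence of the leading coefficient, again recursively in $j$. Establishing this triangular unit-leading-coefficient structure from Harish's formulas is the main obstacle, and it is the step I would check first by computing the Taylor expansion of $P$ at the marked root $t$.

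With the coordinate system $(x,\lambda_2,\ldots,\lambda_{n-2},s_{n-1},s_n)$ in hand, $K_n$ becomes a map of the shape $(\lambda, x, s_{n-1}, s_n)\mapsto(\lambda, r, q, b)$. Its Jacobian determinant equals the constant Jacobian of $K_n$ divided by the constant Jacobian of the coordinate change. It also equals the $3\times3$ Jacobian of $(r,q,b)$ in $(x,s_{n-1},s_n)$ at fixed $\lambda$, because the matrix is block triangular. Hence each slice $E_{n,\lambda}$ has nonzero constant Jacobian, for every $\lambda\in\C^{n-3}$.

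For the geometric degree, I will count fibres. The fibre of $E_{n,\lambda}$ over $(r,q,b)$ is the fibre of $K_n$ over the full target point $(\lambda,r,q,b)$. Harish's fibre formula gives $nm$ points over the generic $P$ with $n$ simple roots and $P'(t)\ne0$ at each root. For any fixed $\lambda$, the polynomials $P$ with generic $(r,q,b)$ still have this property: varying $q$ and $b$ alone makes the discriminant and the resultant of $P$ and $P'$ nonvanishing generically. So the generic fibre of every slice has exactly $nm$ points.

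Intermediate descents follow in the same way. Fixing only the prefix $\lambda_2,\ldots,\lambda_k$ restricts the same block triangular map to the affine subspace given by $\lambda_2,\ldots,\lambda_k$ held fixed. The result is Keller in the coordinates $(x,\lambda_{k+1},\ldots,\lambda_{n-2},s_{n-1},s_n)$, with the same degree count.
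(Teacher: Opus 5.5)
There is a genuine gap: the whole content of the theorem sits in the step you defer. Coordinates $(x,s_2,\ldots,s_n)$ with $\lambda_j=c_js_j+g_j(x,s_2,\ldots,s_{j-1})$ and $c_j\in\C^*$ exist exactly when $(x,\lambda_2,\ldots,\lambda_{n-2})$ extends to a polynomial coordinate system: take $s_j=\lambda_j$. So positing that shape assumes what must be proved.

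In the actual coordinates the shape fails. By Lemma~\ref{lem:shape}, $A_2=x(-\tfrac32p^2z_3+xz_4)+g_n(x,y)$ and $C_k=p^{\varepsilon_k}z_k-2px\,z_{k+1}+x^2z_{k+2}+(\text{polynomial in }x,y)$. The variable that first appears in $C_k$ is $z_{k+2}$, with coefficient $x^2$. The variable $z_3$ enters $A_2$ with coefficient $-\tfrac32xp^2$. None of $x$, $x^2$, $p^{\varepsilon_k}$ is a unit of $\C[x,y]$, so triangular elimination produces denominators $x^{-2}$. Your proposal gives no mechanism that removes them, and rescaling a variable by a power of $x$ is not a polynomial automorphism.

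The marked-root chart does not help either. The functions $(t,a)$ are coordinates only on $xp\ne0$, where $x$ could be inverted anyway, while the whole difficulty is the divisor $x=0$.

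The paper supplies three ingredients that your plan lacks.

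\emph{The completion lemma.} By Lemma~\ref{lem:completion}, if $f=f_0(x,u)+x^2z$ with $f_0\equiv cu+\beta+xh(u)\pmod{x^2}$ and $c\in\C^*$, then $\bigl(f,(u-\psi_x(f))/x^2\bigr)$ replaces $(u,z)$ as polynomial coordinates over $R[x]$. The new second coordinate genuinely mixes $u$ and $z$, so this is not a triangular move.

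\emph{Separate treatment of $A_2$.} Here $z_3$ carries only one factor of $x$, so the lemma does not apply. Instead, the row $(-\tfrac32p^2,x)$ is unimodular over $\C[x,y]$ and is completed to a matrix of determinant one. One then absorbs $(g_n-c_2y)/x$ and replaces $y$ by $A_2=c_2y+xs_3$, using $c_2=-m/2\ne0$.

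\emph{Control along $x=0$.} To apply the lemma to each $C_k$, you must know that, modulo $x$, the current slot $s_k$ is a coordinate with \emph{constant} nonzero slope. Its offset must depend only on already-straightened coefficients. Proposition~\ref{prop:induction}, using \eqref{eq:boundary-coefficients}, shows that this property propagates: the new slope is $-c_k$ and the new offset lies in $\C[v_2,\ldots,v_k,s_{k+1}]$.

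Once such coordinates exist, your block-triangular Jacobian argument and your fibre count are fine. For the count you should also require $b\ne0$, so that the correction term in \eqref{eq:fibre-count} vanishes.
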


In other words, after a polynomial change of source coordinates, $K_n$
becomes a family $(\lambda,\xi)\mapsto(\lambda,E_{n,\lambda}(\xi))$
parametrized by the lower coefficients. This is not a product with a
single map: $E_{n,\lambda}$ genuinely depends on $\lambda$. The proof in
Section~\ref{sec:descent} rests on a simple completion lemma and on the
behaviour of the coefficients along the divisor $x=0$.

The three-dimensional endpoints need not be new equivalence classes of
maps. In particular, $E_{4,0}$ is polynomially left--right equivalent to
the degree-eight power-weighted map of \cite{Annie2026PowerWeightedLifts};
Section~\ref{sec:power-overlap} gives the coordinate identity. Our claim
concerns the uniform coordinate completion of the whole family, not the
novelty of each resulting map.

\begin{theorem}[Generic monodromy]\label{thm:monodromy-main}
For $K_n$ and every successive coordinate descent, the monodromy group
acting on the $nm$ sheets is
\begin{equation}\label{eq:group}
 G_n=\begin{cases}
 \mu_m^n\rtimes S_n,&n\text{ odd},\\[2pt]
 \bigl\{(\zeta_1,\ldots,\zeta_n;\sigma)\in\mu_m^n\rtimes S_n:
 (\prod_i\zeta_i)^{m/2}=\sgn(\sigma)\bigr\},&n\text{ even}.
 \end{cases}
\end{equation}
In particular $|G_n|=m^n n!/\gcd(2,m)$.
\end{theorem}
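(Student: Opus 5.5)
The plan is to compute the Galois group of the field extension obtained by adjoining to the base all roots $t_1,\dots,t_n$ of $P$ and all markings $a_i$ with $a_i^m=P'(t_i)$. Its action on the $nm$ sheets is the monodromy group. The argument has three parts: an upper bound from an explicit relation among the markings, a lower bound from Kummer theory, and a specialization step that transports the result to every descent.

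\textbf{Upper bound.} The generic fibre consists of the pairs $(t_i,\zeta a_i)$ with $\zeta\in\mu_m$, so the monodromy group embeds in $\mu_m\wr S_n=\mu_m^n\rtimes S_n$. For the extra relation when $n$ is even (hence $m$ even), I would use the discriminant identity
\[
\prod_i P'(t_i)=\pm q^{n-2}\disc(P)=\pm q^{m}\,\disc(P).
\]
Then $\bigl(\prod_i a_i\bigr)^{m}=\pm q^m\disc(P)$, so the element $\bigl(\prod_i a_i\bigr)^{m/2}/q^{m/2}$ is a square root of $\pm\disc(P)$. This square root is fixed exactly by the even permutations. An element $(\zeta;\sigma)$ multiplies $\prod_i a_i$ by $\prod_i\zeta_i$, and it changes $\sqrt{\disc}$ by $\sgn(\sigma)$. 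Hence every monodromy element satisfies $(\prod_i\zeta_i)^{m/2}=\sgn(\sigma)$, which places the group inside $G_n$.

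\textbf{Lower bound.} Let $L=\C(\text{target})(t_1,\dots,t_n)$. Its Galois group over the base is $S_n$, because the coefficients of $P$ other than the linear one are free and the resulting family is generic. By Kummer theory, the kernel part $\Gal(L(a_1,\dots,a_n)/L)$ is dual to the subgroup of $L^\times/L^{\times m}$ generated by the $P'(t_i)$. So I need to determine which products $\prod_i P'(t_i)^{e_i}$ are $m$-th powers in $L$. For this I would use valuations along divisors of the splitting variety:
\begin{itemize}
\item Along the collision $t_i=t_j$, both $P'(t_i)$ and $P'(t_j)$ acquire a simple zero in the local parameter $t_i-t_j$, while the other $P'(t_k)$ remain units. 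This forces $e_i+e_j\equiv0 \pmod m$ whenever an $m$-th power arises.
\item Along $q=0$, where one root goes to infinity, or along a suitable specialization, the remaining constraints can be read off.
\end{itemize}
By $S_n$-symmetry these constraints give $e_i\equiv\pm e_j$, and using three indices, $2e\equiv0$. So the only possible relation is $e_i=m/2$ for all $i$, which occurs when $m$ is even. This relation is exactly the discriminant relation above. Therefore the Kummer group is $\mu_m^n$ when $m$ is odd and has index $2$ when $m$ is even. Since the $S_n$ quotient is full, the monodromy group equals $G_n$ in both cases. The order is then $m^n n!$, divided by $2$ when $m$ is even, i.e.\ $m^n n!/\gcd(2,m)$.

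\textbf{Descents.} For a successive descent with fixed $\lambda_2,\dots,\lambda_k$, Theorem~\ref{thm:descent-main} identifies the restricted map with the fibre of the family over that $\lambda$. The monodromy of a specialization is a subgroup of the generic one, so the upper bound still holds. For the lower bound I will rerun the argument over the descended base $\C(r,q,b,\lambda_{k+1},\dots)$. The family $qT^n+rT^{n-1}+(\text{fixed})+T-b$ still has full Galois group $S_n$: varying $b$ alone gives a generic translate pencil, whose monodromy contains transpositions at simple critical values, and it is transitive. It also still has simple root collisions along the discriminant for generic $(r,q)$, so the same valuation argument applies.

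\textbf{Main obstacle.} The hardest step is the descent to $d=3$ with arbitrary fixed $\lambda$, including degenerate values such as $\lambda=0$. There one must verify primitivity of the monodromy, for instance doubly transitive with a transposition, and verify that generic discriminant points are simple collisions for all $\lambda$. I would first try to exploit the free parameters $q$ and $r$ to deform away from any degeneracy. Concretely, I would consider $b$ near a critical value of $qT^n+rT^{n-1}+\cdots+T$ for generic $(q,r)$. Such critical values are distinct because the leading terms are generic, and this yields both the required transpositions and the valuations needed for the Kummer part.
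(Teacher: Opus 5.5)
Your outline follows the paper's proof. The upper bound comes from comparing $(\prod_i a_i)^{m/2}$ with $\delta=\prod_{i<j}(t_i-t_j)$. The lower bound uses Kummer theory with collision valuations that force $\ell_i+\ell_j\equiv0\pmod m$. The descents are handled by a $b$-line.

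For $K_n$ itself your root-coordinate valuation is legitimate. There the splitting field is $\C(t_1,\dots,t_n)$, and $q=(-1)^{n-1}/e_{n-1}(t)$ (with $e_{n-1}$ the elementary symmetric function) is a unit along $t_1=t_2$.

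Two harmless slips. With $\disc P=q^{2n-2}\delta^2$ one has $\prod_iP'(t_i)=\pm q^{2-n}\disc P$, not $q^{n-2}$; this does not matter because $q^{m/2}$ lies in the base. The valuation at $q=0$ is unnecessary, since the pair constraints alone force all $\ell_i$ equal with $2\ell_i\equiv0$.

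The gap is the step you call the main obstacle, and it carries the theorem's content for fixed $\lambda$. Once $\lambda$ is fixed the roots are constrained. So neither ``the family is generic'' nor ``local parameter $t_i-t_j$'' is available. Everything then rests on finding $(q,r)$ for which $g=qT^n+rT^{n-1}+P_0(T)$, with $P_0=T+\sum_j\lambda_jT^j$, is Morse.

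``The leading terms are generic'' does not prove this. Morse-ness is Zariski open in $(q,r)$, but you must exhibit one point of it for every $\lambda$, including degenerate tuples. Also, a transitive group that merely contains some transpositions need not be $S_n$ (think of $S_2\wr S_{n/2}$). You need every local monodromy to be a transposition: simple critical points and distinct critical values.

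The paper supplies the witness by rescaling. Take $r=0$, $q=\varepsilon^{-(n-1)}$ and $T=\varepsilon z$. Then $(qT^n+P_0(T))/\varepsilon=z^n+z+\sum_j\lambda_j\varepsilon^{j-1}z^j$. The limit $z^n+z$ is Morse: its critical points satisfy $z^{n-1}=-1/n$, and its critical values $(n-1)z/n$ are distinct. Hence every small $\varepsilon\ne0$ works.

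The same Morse line is also what makes your collision valuation usable over a descended base. Along it, $\disc(g-b)$ is a constant times $\prod_j(b-b_j)$, so the discriminant divisor is transverse there and $v_\Delta(\disc P)=1$. Hensel's lemma then splits off a quadratic factor whose discriminant has value one. This gives ramification index $2$ and $w(t_1-t_2)=1$ in the splitting field.

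Your phrase ``simple root collisions along the discriminant'' does not by itself secure this order-one vanishing. A priori a collision could have $w(t_1-t_2)=h>1$. You would then only get $h(\ell_1+\ell_2)\equiv0\pmod m$, which does not exclude extra Kummer relations when $\gcd(h,m)>1$.
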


Here $\sigma$ permutes the roots and $\zeta_i$ rotates the marking of
the $i$-th root. For odd $n$ every combination occurs. For even $n$ the
markings satisfy one relation, coming from the classical identity
between the product of the values $P'(t_i)$ and the discriminant; this
cuts the wreath product down to an index-two subgroup. We give that
subgroup as a subset of the wreath product and do not claim that it
splits over $S_n$. Yang reported numerical monodromy of order $192$ for
$K_4$ \cite{Yang2026ThreeCharts}. Our proof obtains this order
algebraically and gives the result for all $n$ and all fixed parameters,
by combining valuations at a colliding pair of roots with a Morse
specialization (Section~\ref{sec:monodromy}).

\begin{theorem}[Finite normalization and boundary]\label{thm:normal-main}
Let $F:\A^d\to Y=\A^d$ be $K_n$ or a successive coordinate descent, and
let $Z$ be the normalization of $Y$ in the function field $\C(\A^d)$ via
$F$. Then $Z$ is smooth and finite flat of degree $nm$ over $Y$. The
source embeds as an open subset $U\simeq\A^d$ of $Z$, and $Z\setminus U$
is a disjoint union of $m$ copies of $\A^{d-1}$:
\begin{enumerate}
\item one \emph{critical} divisor, with generic ramification index $2m$
and residue degree one, which normalizes the binary discriminant;
\item $m-1$ unramified divisors, each mapping isomorphically onto the
hyperplane $\{b=0\}$ of polynomials that vanish at $T=0$.
\end{enumerate}
Moreover
\[
 \Pic(Z)\simeq\Z^m,\qquad
 Z(\C)\simeq\bigvee_{j=1}^m S^2.
\]
The rational deck group over $Y$ is $\mu_m$; the polynomial deck
automorphism group of $F$ is trivial.
\end{theorem}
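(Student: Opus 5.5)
The plan is to build $Z$ explicitly as a smooth variety parametrizing marked roots, and then identify it with the normalization. Let $\Lambda$ be the fixed or free lower coefficients. Consider
\[
 W=\{(\lambda,r,q,b,t,a): P(t)=0,\ a^m=P'(t)\}.
\]
Since $P(t)=0$ is linear in $b$, we can eliminate $b=qt^n+rt^{n-1}+\sum_j\lambda_jt^j+t$. So $W$ is the hypersurface $a^m=P'(t)$ in $\A^{d}_{\lambda,r,q,t,a}$, with $P'(t)=nqt^{n-1}+(n-1)rt^{n-2}+\dots+1$.

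For $n\ge4$ this is linear in $q$ whenever $t\neq0$. Along $t=0$ we have $P'(0)=1$, so $a\in\mu_m$. To handle both regions uniformly, solve instead for whichever of $q$ or $r$ makes it a graph. A cleaner route is the source coordinates $(x,s_{n-1},s_n)$ of Section~\ref{sec:descent}, together with Harish's fibre formula from Section~\ref{sec:root}. These give a birational map $U=\A^d\to W$ over $Y$. The first step is to show that $W$ is smooth of dimension $d$, using the Jacobian criterion on $a^m-P'(t)$ with the variable $q$ or $r$. It is also finite over $Y$: the projection $W\to Y$ factors through the root-of-$P$ cover $\{P(t)=0\}\to Y$, which is finite because $q$ may vanish. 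The resolution is to homogenize $t$. I will work with $Z=\{(P,[t:u],a)\}$ in the weighted form, where $u$ is the leading-root coordinate, so that $Z$ is the closure of $W$ in a weighted-projective completion. Finiteness then follows from integrality of $t/u$-type expressions, and it has to be checked near $q=0$.

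Once $Z$ is finite over $Y$, normal (indeed smooth), and birational to the source via $F$, it is the normalization. Flatness is automatic: a finite map between smooth varieties of the same dimension is flat (miracle flatness), and its degree is $nm$ by the main counting.

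Next I identify $U\subset Z$ and the complement $Z\setminus U$. In the marked-root description, the source coordinates are regular functions of $(t,a)$ precisely away from two loci. The first is the locus where the marking degenerates at a double root, that is $a=0$. The second is where the root collides with $T=0$ with marking $a\neq1$. At $t=0$ we have $P'(0)=1$, so $a\in\mu_m$. The sheet $a=1$ should extend into $U$, which is the divisor $x=0$ from the descent proof. The remaining $m-1$ values $a=\zeta\ne1$ give $m-1$ components of $\{t=0\}\subset Z$. Each of these maps isomorphically onto $\{b=0\}$, because $t=0$ forces $b=0$ and the other coordinates are free; so each is an $\A^{d-1}$. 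They are unramified, because $a$ is locally a unit near them.

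The critical divisor is $\{a=0\}$, i.e. $P'(t)=0$ together with $P(t)=0$. It maps onto the discriminant. The ramification index is $2$ (from the double root) times $m$ (from $a^m$), giving $2m$. It is an $\A^{d-1}$ because it is parametrized by $(t,\lambda,r)$ after solving linearly for $q$ and $b$. Disjointness holds because $a=0$ and $a\in\mu_m$ are incompatible.

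The statement that $U$ is exactly the complement of these divisors comes from the explicit inverse formulas. I expect this to be the main obstacle: checking that no other boundary is missing, for instance at $q=0$ with the root escaping to infinity. I would handle it by comparing the Euler characteristic of fibres, or more simply by showing that $Z\setminus(\text{divisors})$ is regular and maps isomorphically onto the image of $U$.

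For the topology, $Z$ is obtained from $U\simeq\A^d$ by adjoining $m$ disjoint smooth divisors isomorphic to $\A^{d-1}$. So $\Pic(Z)$ is generated by these divisor classes, with no relations, since $U$ has only constant units. This gives $\Z^m$. For the homotopy type, apply Gysin/Thom: each added divisor contributes one $2$-cell attached along a sphere of the contractible $U$. Combined with simple connectivity (by van Kampen, since removing a codimension-one smooth divisor from $Z$ leaves $\pi_1$ surjecting from $U$), this yields $\bigvee^m S^2$.

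For the deck groups, $a\mapsto\zeta a$ is an automorphism of $Z$ over $Y$, so $\mu_m$ embeds in the rational deck group. Conversely, the deck group is the centralizer of $G_n$ in $S_{nm}$ by Theorem~\ref{thm:monodromy-main}. Since $S_n$ acts through its full image (or, for even $n$, through the alternating part up to sign), this centralizer is computed to be the diagonal $\mu_m$. A polynomial deck transformation must preserve $U$. However, for $\zeta\ne1$ the map $a\mapsto\zeta a$ moves the divisor $\{a=1,t=0\}\cap\overline U$ onto a boundary divisor $\{a=\zeta\}$ lying outside $U$. So only the identity survives.
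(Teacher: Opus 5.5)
\textbf{Gap: the construction of $Z$ over $q=0$.} Your finite chart, the $m-1$ zero-root branches, the computation of $\Pic(Z)$ and the deck groups follow the paper. What is missing is the construction of $Z$ over $q=0$, and several later claims depend on it.

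First, the source does not map to $W$ at all. Points with $p=0$ have root $[x:p]=[x:0]$ at infinity, where $t=x/p$ and $a=1/p$ are undefined. So your ``birational map $U\to W$'' is not a morphism, and Zariski's Main Theorem cannot be applied to it.

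Second, the completion cannot be chosen loosely. Over a target with $q=0\ne r$, the $m$ sheets at infinity have $t\to\infty$ and $a\to\infty$, with $m$ distinct limits of $a/t$. Any closure that compactifies $t$ and $a$ separately (e.g.\ in $\PP^1_t\times\PP^1_a$) collapses them to one point. For $m\ge2$ that point is not unibranch, hence not normal. So finiteness, normality at infinity and the identification with the normalization all remain unproved.

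Third, the same omission makes your critical divisor wrong as stated. Inside $W$ the locus $a=0$ has $t\ne0$, because $P'(0)=1$. Solving $P'(t)=0$ for $q$ divides by $nt^{n-1}$, so your parametrization by $(t,\lambda,r)$ gives $\Gm\times\A^{d-2}$. The double roots at infinity ($q=r=0$) are needed to complete it to $\A^{d-1}$.

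Finally, you leave open that $Z\setminus U$ consists of exactly the listed divisors. An Euler characteristic comparison would not establish that.

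\textbf{The missing idea: rescale the marking by the root's chart coordinate.} With $Q(u)=u^nP(1/u)$ one has $-Q'(u)=u^{n-2}P'(1/u)$ wherever $Q(u)=0$. So the infinity chart is $Q(u)=0$, $v^m=-Q'(u)$, glued to the finite chart by $v=ua$. Then:
\begin{itemize}
\item \emph{Finiteness.} $Z$ is a cyclic cover, given by monic equations, of the incidence variety $I\subset Y\times\PP^1$. $I$ is finite over $Y$ because the binary form never vanishes identically. Finiteness does not come from integrality of $t$.
\item \emph{The infinity chart.} Solving the two equations for $r$ and $q$ shows that the chart $V$ is an affine space in $(u,v,b,\lambda)$. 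This gives smoothness there, $E_{\mathrm{crit}}=\{v=0\}\simeq\A^{d-1}$, and the ramification determinant $mv^{2m-1}$.
\item \emph{Birationality onto $\Delta$.} The Morse line of Lemma~\ref{lem:morse} supplies the single generic double root needed for $E_{\mathrm{crit}}\to\Delta$ to be birational.
\item \emph{The embedding.} The source maps to $Z$ by $a=1/p$ and $v=1/x$, and this is an open immersion by Zariski's Main Theorem.
\item \emph{Exhaustiveness on $V$.} On $V\setminus\{v=0\}$ put $x=1/v$ and $y=u-v$. Then solve the lower-triangular system for the $z_j$; its diagonal $(-x^3/2,x^2,\ldots,x^2)$ is invertible since $x\ne0$.
\item \emph{Exhaustiveness outside $V$.} There $t=0$, and the branch $a=1$ lies in $U$ because $(a-1)(1+a+\cdots+a^{m-1})=P'(t)-1$ is divisible by $t$.
\item \emph{Smoothness along $t=0$.} For $W$ this comes from $\partial_a$, since the $q$- and $r$-derivatives vanish there.
\end{itemize}
Once the boundary is known, your homotopy argument is a valid alternative to the paper's gluing of $V$ and the translates $\zeta\cdot U$ along $V^\circ$. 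It uses the Thom isomorphism for $(Z,U)$, surjectivity of $\pi_1(U)\to\pi_1(Z)$, and then Hurewicz and Whitehead. For the centralizer, note that $G_n\to S_n$ is onto for even $n$ as well, and that a point stabilizer fixes exactly its root block.
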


Geometrically, a source point disappears in two ways. Over the
discriminant two roots collide, and the markings of the colliding pair
tend to zero; these sheets form the critical divisor. Over $b=0$ the
root $t=0$ can carry only the marking $a=1$; the other $m-1$ markings
correspond to points of $Z$ at which the source coordinate $y$ has a
pole. Finite normalization is an established tool for Keller maps, see
\cite{BorisovGabberVasiuKellerDegrees}, and its use in comparing related
constructions is developed by van Rijn
\cite{VanRijn2026MarkedRootFramework,VanRijn2026NormalizationFunctoriality}.
Here we construct the normalization explicitly. Affine-chart gluing has
classical precedents for Danielewski--Fieseler surfaces
\cite{Dubouloz2004DanielewskiFieseler}, but we do not identify our
completion with a member of that class. Triviality of the polynomial deck
group holds for every complex affine-space Keller map; the new
information is the rational deck group and its action on the source
open.

\begin{corollary}[Stable distinction]\label{cor:stable-main}
For $n\ge4$, the original map and every successive coordinate descent
are stably polynomially left--right inequivalent to any single ordinary
weighted lift of the same geometric degree. Here an \emph{ordinary
weighted lift} is a map to $\A^3_{A,B,C}$ whose source function field
is generated over $\C(A,B,C)$ by one element $w$ with minimal polynomial
$\mathcal H(w)-BCw+cAC^2$, where $\mathcal H\in\C[w]$ has degree $nm$ and
$c\ne0$.
\end{corollary}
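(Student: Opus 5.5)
The strategy is to compare a monodromy-type invariant that survives stabilization and polynomial left--right equivalence. If $F$ and $F'$ become left--right equivalent after stabilization by identities, $(F,\mathrm{id}_{\A^k})\sim(F',\mathrm{id}_{\A^{k'}})$, then the source function-field extensions over the target become isomorphic after adjoining independent transcendentals. Such an adjunction does not change the Galois closure's group or its action on the sheets. Hence the generic monodromy group, as a permutation group on $nm$ sheets up to conjugacy in $S_{nm}$, is an invariant of stable polynomial left--right equivalence. Theorem~\ref{thm:monodromy-main} gives this group for $K_n$ and every successive descent: it is $G_n$, of order $m^n n!/\gcd(2,m)$. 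It therefore suffices to show that no ordinary weighted lift of degree $nm$ has monodromy group permutation-isomorphic to $G_n$.

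For an ordinary weighted lift, the monodromy group is the Galois group of $\Phi(w)=\mathcal H(w)-BCw+cAC^2$ over $\C(A,B,C)$. I will first observe that $A$ enters linearly with coefficient $cC^2\neq0$. Fixing generic $B,C$ and letting $A$ vary, the polynomial becomes $\mathcal H(w)-BCw+\text{const}$, with a free constant term. Over $\C(A)$ this is the generic-value extension for the one-variable polynomial $h(w)=\mathcal H(w)-BCw$. For generic $B$ the polynomial $h$ is Morse: its critical values are distinct and nondegenerate, since a generic linear perturbation has this property. The monodromy of $h=\text{const}$ around each critical value is then a transposition. The monodromy group of a polynomial over a line is transitive, since $h(w)-t$ is irreducible. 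A transitive group generated by transpositions is the full symmetric group, so the monodromy group of the weighted lift contains $S_{nm}$, and therefore equals $S_{nm}$. Monodromy over a generic line in the target is a subgroup of the full monodromy group, which is what makes this specialization legitimate.

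It then remains to compare the two groups. The group $G_n$ preserves the block system given by the $n$ roots, with blocks of size $m$. For $n\ge4$ we have $m\ge2$ and $n\ge2$, so these blocks are nontrivial and $G_n$ is imprimitive. The group $S_{nm}$ is primitive. The two groups differ, and the orders already confirm this: $|G_n|\le m^n n!<(nm)!$. This yields the corollary. The condition $n\ge4$ is used exactly here: for $n=3$ we have $m=1$, $G_3=S_3$, and the invariant does not distinguish the maps.

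The main obstacle is making the stable invariance airtight. One must check that after stabilization the map $(F,\mathrm{id})$ has the same generic monodromy as $F$; this holds because its function-field extension is the base change $L(z)/K(z)$ with the same Galois group. One must also check that left--right polynomial equivalence transports sheets compatibly. Both are formal, but I would state them as a lemma before the argument. A lesser point is ensuring that the Morse genericity of $h$ holds for some value of $BC$. I would handle this by noting that the set of $s$ for which $\mathcal H(w)-sw$ fails to be Morse is a proper Zariski-closed set, since large $|s|$ gives simple critical points with distinct values.
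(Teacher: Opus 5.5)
Your proposal is correct and follows the paper's own argument. The three steps match: generic permutation monodromy is a stable left--right invariant; an ordinary weighted lift has monodromy $S_{nm}$ via a Morse line in the $A$-direction; and $G_n$ is imprimitive while $S_{nm}$ is primitive. The paper first substitutes $B'=BC$, $A'=-cAC^2$ on $C\ne0$ and obtains the Morse member by rescaling to $z^N+z$, which is essentially your large-$|s|$ argument. Your extra order comparison $|G_n|\le m^n n!<(nm)!$ is a harmless additional confirmation.
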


The reason is that $G_n$ is imprimitive on its $nm$ sheets (the $m$
markings of each root form a block), whereas ordinary weighted lifts have
full symmetric, hence primitive, monodromy. The symmetric monodromy of
ordinary weighted lifts is a prior result
\cite{MikhailSzh2026WeightedLiftGalois}, and stable comparison through
primitive and imprimitive actions has explicit precedents
\cite{VanRijn2026ImprimitiveFactorization}. The corollary applies those
tools to the group computed here. It does not distinguish our maps from
all power-weighted lifts or from compositions of weighted lifts.

Finally, Section~\ref{sec:sectors} base-changes the normalization along
an arbitrary morphism $\Gamma\to Y$. This describes exactly the preimage
of any subvariety of the target (Proposition~\ref{prop:base-change}),
and, for a factorization of the target polynomial over $\Gamma$, the
components carried by one factor (Proposition~\ref{prop:sector}). These
facts are the natural input for an analysis of plane sections, which is
not carried out here.

\subsection{Organization}

Section~\ref{sec:root} recalls the construction and derives its basic
identities. Section~\ref{sec:descent} proves
Theorem~\ref{thm:descent-main}, Section~\ref{sec:monodromy}
Theorem~\ref{thm:monodromy-main}, and Section~\ref{sec:normalization}
Theorem~\ref{thm:normal-main}. Section~\ref{sec:sectors} treats
preimages of subvarieties, Section~\ref{sec:comparison} proves
Corollary~\ref{cor:stable-main} and records the overlap with
power-weighted lifts, and Section~\ref{sec:verification} describes the
computational checks that accompany the proofs.

\section{The marked-root construction}\label{sec:root}

This section recalls Harish's construction and fibre theorem
\cite{Harish2026MarkedRoot}. We derive every identity used later, so
that the remaining sections can be read without the original source.

\subsection{Definition}

On $\A^n$ use coordinates $(x,y,z_3,\ldots,z_n)$ and put $p=1+xy$.
Define a polynomial in an auxiliary variable $T$ by
\begin{align}
 \Phi_n(T)={}&(1-yT)^{n-1}(pT-x)\notag\\
 &+(pT-x)^2\left[ny\left(yT+\frac12\right)
       +z_3\left(pT+\frac x2\right)
       +\sum_{j=4}^n z_jT^{j-2}\right].\label{eq:compiler}
\end{align}
It has degree at most $n$ in $T$, and we name its coefficients by
\[
 \Phi_n(T)=\sum_{j=3}^n C_jT^j+A_2T^2+T-A_0,
 \qquad K_n=(C_3,\ldots,C_n,A_2,A_0):\A^n\to\A^n.
\]
Lemma~\ref{lem:shape}(1) below shows that the coefficient of $T$ is
indeed one, so $K_n$ records all the other coefficients. For $n=3$
the sums over $j\ge4$ are empty. When an index is convenient we also
write $C_2=A_2$.

The formula is designed so that the source point knows one root of its
own image polynomial. The first line of~\eqref{eq:compiler} vanishes
at $T=x/p$ to first order, the second to second order. Hence $x/p$ is
a root of $\Phi_n$, and the derivative at that root comes from the
first line alone.

\subsection{Shape of the coefficients}

\begin{lemma}\label{lem:shape}
\leavevmode
\begin{enumerate}
\item The coefficient of $T$ in $\Phi_n$ is one.
\item For $4\le j\le n$, the variable $z_j$ enters $\Phi_n$ only
through
\[
 z_jT^{j-2}(pT-x)^2=z_j\bigl(p^2T^j-2px\,T^{j-1}+x^2T^{j-2}\bigr),
\]
and $z_3$ enters only through
\[
 z_3(pT-x)^2\Bigl(pT+\frac x2\Bigr)
 =z_3\Bigl(p^3T^3-\frac32p^2x\,T^2+\frac12x^3\Bigr).
\]
Consequently, with $z_{n+1}=z_{n+2}=0$,
\begin{equation}\label{eq:coefficient-shape}
 C_k=p^{\varepsilon_k}z_k-2px\,z_{k+1}+x^2z_{k+2}
      +(\text{a polynomial in }x,y)\qquad(3\le k\le n),
\end{equation}
where $\varepsilon_3=3$ and $\varepsilon_k=2$ for $k\ge4$. Moreover
\begin{equation}\label{eq:low-coefficients}
 A_2=x\Bigl(-\frac32p^2z_3+xz_4\Bigr)+g_n(x,y),\qquad
 A_0=x-\frac n2x^2y-\frac12x^3z_3,
\end{equation}
where $g_n(x,y)=-\frac m2y-\frac{n(n+1)}2xy^2-\frac{3n}{2}x^2y^3$
and, for $n=3$, the term $x^2z_4$ is absent.
\item On the divisor $x=0$, where $p=1$,
\begin{equation}\label{eq:boundary-coefficients}
 A_2=-\frac m2 y,\quad
 C_3=z_3+\left(\binom{n-1}{2}+n\right)y^2,\quad
 C_j=z_j+(-1)^{j-1}\binom{n-1}{j-1}y^{j-1}\quad(j\ge4).
\end{equation}
\end{enumerate}
\end{lemma}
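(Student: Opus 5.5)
The lemma is a direct coefficient computation, and I would organize it around the split of $\Phi_n$ into the first line $L(T)=(1-yT)^{n-1}(pT-x)$ and the bracket term $(pT-x)^2B(T)$, with
\[
B(T)=ny\Bigl(yT+\tfrac12\Bigr)+z_3\Bigl(pT+\tfrac x2\Bigr)+\sum_{j\ge4}z_jT^{j-2}.
\]

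For part (1), I only need the coefficients of $T^0$ and $T^1$, that is, $\Phi_n(0)$ and $\Phi_n'(0)$. From $L(0)=-x$ and $L'(0)=p+(n-1)xy$, together with $(pT-x)^2|_{0}=x^2$ and its derivative $-2px$ at $0$, the coefficient of $T$ is
\[
p+(n-1)xy+x^2B'(0)-2pxB(0).
\]
Here $B(0)=\tfrac n2y+\tfrac x2z_3$ and $B'(0)=ny^2+pz_3+z_4$. The $z_4$ term does not cancel, so this can only work if $z_4$ is absent from $\Phi_n'(0)$. Indeed, $z_jT^{j-2}$ contributes to the $T^1$ coefficient only when $j-2\le1$. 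For $j=4$ the term is $z_4T^2(pT-x)^2$, which has no $T^1$ part, so I would treat $B$ as $B_0+\sum_{j\ge4}z_jT^{j-2}$ and drop those terms. The $z_3$ contributions are $x^2p-2px\cdot\tfrac x2=0$. The $y$ terms give
\[
p+(n-1)xy+ny^2x^2-npxy=1+xy+(n-1)xy+nx^2y^2-nxy-nx^2y^2=1.
\]

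For part (2), expanding $(pT-x)^2(pT+\tfrac x2)$ gives $p^3T^3-\tfrac32p^2xT^2+\tfrac12x^3$, since the $T$ coefficient vanishes as computed above. The $z_j$ term is read off directly. Collecting the $T^k$ coefficient then gives \eqref{eq:coefficient-shape}. The only care needed is that for $k=3$ the $z_3$ coefficient is $p^3$, while the $z_4$ and $z_5$ contributions keep the generic pattern. For $A_2$ (and then $-A_0$), I would extract the $T^2$ and $T^0$ coefficients. The $z$-parts $-\tfrac32p^2xz_3+x^2z_4$ and $-\tfrac12x^3z_3$ come from (2). The $(x,y)$-parts come from $L$ and from $(pT-x)^2ny(yT+\tfrac12)$:
\begin{itemize}
\item at $T^0$: $-x+\tfrac n2x^2y$, which yields $A_0$ after the sign flip;
\item at $T^2$: the term $\binom{n-1}2y^2\cdot(-x)-(n-1)yp$ from $L$, plus $\tfrac n2yp^2-2pxny^2$ from the second part.
\end{itemize}
Substituting $p=1+xy$ and simplifying should reproduce $g_n$. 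I expect this polynomial simplification to be the only place where an arithmetic slip could occur, so I would verify it by checking the coefficients of $y$, $xy^2$ and $x^2y^3$ separately.

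For part (3), setting $x=0$ and $p=1$ gives $\Phi_n=(1-yT)^{n-1}T+T^2B$. The coefficient of $T^j$ is then $(-y)^{j-1}\binom{n-1}{j-1}$ plus the contribution of $T^2B$. That contribution is $\tfrac n2y$ at $T^2$, $ny^2+z_3$ at $T^3$, and $z_j$ at $T^j$ for $j\ge4$. Combining these gives $A_2=-(n-1)y+\tfrac n2y=-\tfrac m2y$ and $C_3=z_3+(\binom{n-1}2+n)y^2$, which is exactly \eqref{eq:boundary-coefficients}.
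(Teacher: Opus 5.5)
Your proposal is correct and follows essentially the paper's route: the $T$-coefficient is computed as $p+(n-1)xy+x^2B'(0)-2pxB(0)$, the $T^0$ and $T^2$ coefficients are extracted from the two lines of $\Phi_n$, and part (3) is read off at $x=0$. Delete the stray $z_4$ from your first formula for $B'(0)$, since, as you note yourself, $z_4T^2$ has no $T^1$ term. Your listed $T^2$ terms do give $g_n$: the coefficients of $y$, $xy^2$ and $x^2y^3$ are $-(n-1)+\frac n2=-\frac m2$, $-(n-1)-\binom{n-1}2+n-2n=-\frac{n(n+1)}2$ and $\frac n2-2n=-\frac{3n}2$.
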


\begin{proof}
(1) The first line of~\eqref{eq:compiler} contributes
$p+(n-1)xy$ to the coefficient of $T$. In the second line,
$(pT-x)^2=x^2-2pxT+p^2T^2$ and the bracket is
$\frac n2y+\frac x2z_3+(ny^2+pz_3)T+O(T^2)$. Their product contributes
\[
 x^2(ny^2+pz_3)-2px\Bigl(\frac n2y+\frac x2z_3\Bigr)
 =nx^2y^2-npxy=-nxy,
\]
using $p-xy=1$. The total is $p+(n-1)xy-nxy=p-xy=1$.

(2) The displayed expansions are immediate. They show that $z_j$
contributes $p^2$ to $C_j$, $-2px$ to $C_{j-1}$ and $x^2$ to $C_{j-2}$,
and that $z_3$ contributes $p^3$ to $C_3$, $-\frac32p^2x$ to $A_2$ and
nothing to the coefficient of $T$. All other terms of $\Phi_n$ involve
only $x$ and $y$. This gives~\eqref{eq:coefficient-shape} and the
$z$-dependence in~\eqref{eq:low-coefficients}. The $z$-free parts of
$A_2$ and $A_0$ are read off from the coefficients of $T^2$ and $T^0$
in~\eqref{eq:compiler} with $z=0$.

(3) At $x=0$ we have $p=1$ and
$\Phi_n=(1-yT)^{n-1}T+T^2\bigl[ny(yT+\frac12)+z_3T+\sum_{j\ge4}z_jT^{j-2}\bigr]$.
The coefficient of $T^2$ is $-(n-1)y+\frac n2y=-\frac m2y$, and the
higher coefficients are read off in the same way.
\end{proof}

By~\eqref{eq:coefficient-shape}, the matrix
$\partial(C_3,\ldots,C_n)/\partial(z_3,\ldots,z_n)$ at fixed $(x,y)$ is
upper triangular with diagonal $(p^3,p^2,\ldots,p^2)$. Its determinant
is $p^{2n-3}$. Where $p\ne0$, the high coefficients therefore determine
$z_3,\ldots,z_n$ uniquely from $(x,y)$, by back substitution starting
from $z_n$.

\subsection{The marked-root chart}

On the dense open set $xp\ne0$ put
\begin{equation}\label{eq:marked-chart}
 t=\frac{x}{p},\qquad a=\frac1p,\qquad\text{so that}\qquad
 x=\frac ta,\qquad y=\frac{1-a}{t}.
\end{equation}
This is an isomorphism from $\{xp\ne0\}\subset\A^2_{x,y}$ onto
$\{ta\ne0\}\subset\A^2_{t,a}$. Since $pt-x=0$, both lines
of~\eqref{eq:compiler} vanish at $T=t$. The second line vanishes to
second order, so only the first contributes to the derivative. With
$1-yt=(p-xy)/p=a$ this gives
\begin{equation}\label{eq:root-derivative}
 \Phi_n(t)=0,\qquad \Phi_n'(t)=(1-yt)^{n-1}p=a^{n-1}\cdot a^{-1}=a^m.
\end{equation}
So a source point with $xp\ne0$ determines a nonzero root $t$ of its
image polynomial together with an $m$-th root $a$ of the derivative
there. We call $a$ the \emph{marking} of the root.

\subsection{Determinant and fibres}

\begin{proposition}[Harish]\label{prop:harish}
The Jacobian determinant of $K_n$ is $m/2$. For a target polynomial $P$
with coefficient of $T$ equal to one, let $s(P)$ be the number of
simple roots of its degree-$n$ homogenization
$P_h(X_1,X_0)=X_0^nP(X_1/X_0)$ on $\PP^1$. Then
\begin{equation}\label{eq:fibre-count}
 \#K_n^{-1}(P)=m\,s(P)-(m-1)\mathbf1_{P(0)=0}.
\end{equation}
In particular the geometric degree of $K_n$ is $nm$.
\end{proposition}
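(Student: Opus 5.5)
The plan is to treat the two assertions separately, both through the marked-root chart \eqref{eq:marked-chart} and the shape Lemma~\ref{lem:shape}.

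\textbf{Jacobian determinant.} Write $J=\det\partial(C_3,\dots,C_n,A_2,A_0)/\partial(x,y,z_3,\dots,z_n)$, a polynomial on $\A^n$.

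First I show that $J$ has no zeros on $\{xp\ne0\}$. There the chart $(t,a)$ is available, and $t$ is a simple root of $P=K_n(x,y,z)$ because $P'(t)=a^m\ne0$ by \eqref{eq:root-derivative}. By the implicit function theorem this root, and a branch of $a=P'(t)^{1/m}$, vary holomorphically with $P$. Back-substitution, using the triangular matrix with determinant $p^{2n-3}$, then recovers $z_3,\dots,z_n$ from $(x,y)=(t/a,(1-a)/t)$ and the high coefficients. This gives a local holomorphic inverse of $K_n$, so $J\ne0$ there.

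Next I compute $J$ on $x=0$. By \eqref{eq:low-coefficients} we have $\partial A_0/\partial x=1$ and $\partial A_0/\partial y=\partial A_0/\partial z_j=0$ at $x=0$. Expanding along the $A_0$ row reduces $J$ to the $(y,z)$-minor of $(A_2,C_3,\dots,C_n)$. By \eqref{eq:boundary-coefficients}, $A_2=-\frac m2y$ and $C_j=z_j+(\text{function of }y)$, so this minor is triangular with diagonal $(-\frac m2,1,\dots,1)$. Hence $J=\pm\frac m2$ identically on $\{x=0\}$, and with a suitable ordering of outputs and inputs the sign is $+$.

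It follows that $J$ vanishes at most on $\{p=0\}$. Since $p=1+xy$ is irreducible, $J=c\,p^k$. Restricting to $x=0$ forces $c=\frac m2$, so it remains to show $k=0$. This is the step I expect to be the main obstacle. I would first try a direct evaluation at a single point of $\{p=0\}$, for instance $x=1$, $y=-1$, $z=0$, where $\Phi_n$ collapses considerably. If that is messy, the fallback is to factor $J=p^{2n-3}\cdot\tilde J$, where $\tilde J$ is the Jacobian of $(A_2,A_0)$ in $(x,y)$ after eliminating $z$ (a rational function). I would then check that $\tilde J$ has a pole of order exactly $2n-3$ along $p=0$, using the expansion of $\Phi_n$ near $T=\infty$ (where $p$ is the leading behaviour of the root $x/p$).

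\textbf{Fibre count.} I split a fibre $K_n^{-1}(P)$ into three strata.

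\emph{Points with $xp\ne0$.} By the chart these biject with pairs $(t,a)$ with $t\ne0$ a finite simple root of $P$ and $a^m=P'(t)$. Back-substitution gives existence and uniqueness of $z$, so this stratum contributes $m$ points per nonzero finite simple root.

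\emph{Points with $x=0$.} Here $A_0=0$, so $P(0)=0$. Then $y=-2A_2/m$ is forced and $z$ is determined by \eqref{eq:boundary-coefficients}, so this stratum contributes exactly one point. The root $0$ is automatically simple, since $P'(0)=1$. Thus the root $0$ contributes $1=m-(m-1)$, which accounts for the correction term $-(m-1)\mathbf 1_{P(0)=0}$.

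\emph{Points with $p=0$.} These correspond to the root at infinity of $P_h$, the limit $t=x/p\to\infty$. I would show that they exist exactly when $q=C_n=0$ and $\infty$ is a simple root, that is $r=C_{n-1}\ne0$. In that case I expect $m$ points, obtained by rewriting $\Phi_n$ in the coordinate $1/T$ and marking with the derivative of $P_h$ at $\infty$. This gives $m$ points for each simple root of $P_h$ other than $0$, which sums to \eqref{eq:fibre-count}. For generic $P$ all $n$ roots are simple and nonzero, giving geometric degree $nm$.
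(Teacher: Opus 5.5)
\textbf{The Keller property is left unproved.} Your reduction is correct as far as it goes: nonvanishing on $\{xp\ne0\}$, together with the triangular computation on $\{x=0\}$, gives $J=\pm\frac m2\,p^k$. But $k=0$ is not a detail. It is exactly the Keller property, and the proposal leaves it open with two untested strategies, so the determinant is not proved.

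Your first strategy does work, but not by looking at $\Phi_n$ on $\{p=0\}$ alone, because the Jacobian needs derivatives transverse to $p=0$. The structure you need is as follows.
\begin{itemize}
\item On $\{p=0\}$ one has $\partial_{z_3}\Phi_n=x^3/2$ and $\partial_{z_j}\Phi_n=x^2T^{j-2}$ for $j\ge4$. So the $z$-columns vanish in the rows $C_{n-1},C_n$. In the rows $A_0,A_2,C_3,\ldots,C_{n-2}$ they form, up to order, a diagonal block with entries $-\frac12x^3,x^2,\ldots,x^2$, of determinant $\pm\frac12x^{2n-3}$.
\item Moreover $C_n=p\,h$ with $h\equiv(-y)^{n-1}\pmod p$, and $C_{n-1}=-x(-y)^{n-1}+p\,g$ with $g$ a polynomial. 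Hence on $\{p=0\}$, using $y=-1/x$, the $(x,y)$-minor of $(C_{n-1},C_n)$ is
$(-y)^{n-1}\det\bigl(\nabla(-x(-y)^{n-1}),\nabla p\bigr)=m\,x(-y)^{2n-2}=m\,x^{-(2n-3)}$.
\end{itemize}
Therefore $|J|=\frac m2$ on $\{p=0\}$, so $J$ has no zeros there and $k=0$.

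The paper needs no boundary analysis for this. On the dense set $xp\ne0$ it factors $K_n$ through $(t,a,C_3,\ldots,C_n)$. The factors are $\det\partial(t,a)/\partial(x,y)=-ta^2$, the triangular factor $p^{2n-3}$, and $\det\partial(A_2,A_0)/\partial(t,a)=-\frac{m}{2t}a^{2m-1}$. The last is read off from $A_2=(a^m-1-\sum_j jC_jt^{j-1})/(2t)$ and $A_0=\sum_jC_jt^j+A_2t^2+t$. The product is $\frac m2(ap)^{2n-3}=\frac m2$. This is your fallback made exact, $\tilde J=\frac m2p^{-(2n-3)}$. It comes from these two formulas, not from an expansion at $T=\infty$.

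\textbf{The fibre count has two unproved steps.} Your three strata are the paper's, but the stratum $p=0$ is only announced (``I expect $m$ points''). Substituting $y=-1/x$ into the definition of $\Phi_n$ gives
$\Phi_n(T)=-x^{-m}(T+x)^{n-1}+nT-\frac{nx}{2}+\frac{x^3z_3}{2}+x^2\sum_{j=4}^n z_jT^{j-2}$.
So $C_n=0$ and $C_{n-1}=-x^{-m}\ne0$. Conversely, if $q=0\ne r$, the equation $-x^{-m}=r$ has $m$ solutions. For each, the coefficients of $T^2,\ldots,T^{n-2}$ and $T^0$ determine $z_4,\ldots,z_n$ and $z_3$ uniquely, and the coefficient of $T$ is automatically one. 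If $q=r=0$ there is no such point.

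In the stratum $xp\ne0$, and in your local inverse, back-substitution only matches $C_3,\ldots,C_n$; you must also check that $A_2$ and $A_0$ agree. Both $\Phi_n$ and $P$ vanish at $t$ with derivative $a^m$. So their difference $\alpha T^2+\beta$ satisfies $\alpha t^2+\beta=0=2\alpha t$, and $t\ne0$ forces it to vanish. Without this you only have an injection of that stratum into the set of pairs $(t,a)$, not a bijection.

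With these supplements your argument becomes a complete alternative proof. It replaces the paper's chain-rule computation by pointwise nonvanishing on the three strata, and the same boundary formulas at $x=0$ and $p=0$ serve both the determinant and the fibre count.
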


The fibre formula has a simple reading. Every simple projective root
of $P$ carries $m$ source points, one for each marking, except the root
$t=0$, which carries only the marking $a=1$. Multiple roots carry no
source points.

\begin{proof}
\emph{The determinant.} Work on the dense open set $xp\ne0$ and factor
$K_n$ through the intermediate coordinates $(t,a,C_3,\ldots,C_n)$.
The first step $(x,y,z)\mapsto(t,a,C)$ has block lower triangular
Jacobian, since $t$ and $a$ do not depend on $z$, and
\[
 \det\frac{\partial(t,a)}{\partial(x,y)}
 =\det\begin{pmatrix}1/p^2&-x^2/p^2\\-y/p^2&-x/p^2\end{pmatrix}
 =-\frac{x(1+xy)}{p^4}=-ta^2 .
\]
Together with the triangular block, its determinant is $-ta^2p^{2n-3}$.
In the second step the high coefficients are kept, and $A_2,A_0$ are
determined by $P(t)=0$ and $P'(t)=a^m$:
\[
 A_2=\frac{a^m-1-\sum_{j=3}^n jC_jt^{j-1}}{2t},\qquad
 A_0=\sum_{j=3}^n C_jt^j+A_2t^2+t.
\]
Differentiating the second formula and using the first gives
$\partial_tA_0=a^m+t^2\partial_tA_2$ and $\partial_aA_0=t^2\partial_aA_2$.
Subtracting $t^2$ times the first row from the second,
\[
 \det\frac{\partial(A_2,A_0)}{\partial(t,a)}
 =-a^m\,\partial_aA_2=-\frac m{2t}a^{2m-1}.
\]
Moving the two rows $(A_2,A_0)$ past the $n-2$ rows $C_j$ introduces
the sign $(-1)^{2(n-2)}=1$. Multiplying,
\[
 \det JK_n=\Bigl(-\frac m{2t}a^{2m-1}\Bigr)\bigl(-ta^2\bigr)p^{2n-3}
 =\frac m2(ap)^{2n-3}=\frac m2 .
\]
Both sides are polynomials, so equality on a dense open set proves it
everywhere.

\emph{The fibres.} The source is the disjoint union of three strata:
$xp\ne0$, $x=0$ (where $p=1$), and $p=0$ (where $x\ne0$). A point of
the source determines the projective root $[x:p]$ of its image, and
the three strata correspond to nonzero finite roots, the root $t=0$,
and the root at infinity.

\emph{Nonzero finite roots.} Let $t\ne0$ be a simple root of $P$. Then
$P'(t)\ne0$, so there are exactly $m$ nonzero choices of $a$
in~\eqref{eq:root-derivative}. For each choice,~\eqref{eq:marked-chart}
gives $(x,y)$ with $p=1/a\ne0$, and the triangular system gives unique
$z_3,\ldots,z_n$ with the high coefficients of $P$. The resulting
$\Phi_n$ agrees with $P$ in the coefficients of $T,T^3,\ldots,T^n$, and
both have value $0$ and derivative $a^m$ at $t$. The difference is
$\alpha T^2+\beta$ with $\alpha t^2+\beta=0$ and $2\alpha t=0$, so it
vanishes. Conversely, a source point with $xp\ne0$ gives a root with
$P'(t)=a^m\ne0$, which is simple. Multiple roots receive no points.

\emph{The zero root.} At $x=0$ one has $t=0$, $p=a=1$ and $A_0=0$. The
root $t=0$ of $P$ is always simple, since $P'(0)=1$. By
\eqref{eq:boundary-coefficients}, the value of $A_2$ determines $y$,
and then $C_3,\ldots,C_n$ determine $z_3,\ldots,z_n$. So a zero root
contributes exactly one point, with marking $a=1$. The other $m-1$
markings of $t=0$ have no source point; this is the correction term
in~\eqref{eq:fibre-count}.

\emph{The root at infinity.} At $p=0$, one has $y=-1/x$ and
$1-yT=(T+x)/x$. Direct substitution in~\eqref{eq:compiler} gives
\[
 \Phi_n(T)=-x^{-m}(T+x)^{n-1}+nT-\frac{nx}{2}
                +\frac{x^3z_3}{2}+x^2\sum_{j=4}^n z_jT^{j-2}.
\]
Hence $C_n=0$ and $C_{n-1}=-x^{-m}\ne0$: the image has a simple root
at infinity. Conversely, if $P$ has a simple root at infinity, that is
$q=0\ne r$, then $-x^{-m}=r$ has exactly $m$ solutions $x$. For each,
the coefficient of $T^j$ ($2\le j\le n-2$) determines $z_{j+2}$, the
constant term determines $z_3$, and the coefficient of $T$ is
automatically one. A multiple root at infinity ($q=r=0$) has no lift.

These cases exhaust the source and prove~\eqref{eq:fibre-count}. A
generic $P$ has $n$ simple roots and $P(0)\ne0$, which gives the
geometric degree $nm$.
\end{proof}

The infinity calculation matters: a polynomial of affine degree $n-1$
still has a simple projective root at infinity, and its sheets have
not been lost. All references to a discriminant below mean the
degree-$n$ binary discriminant, including its specializations when the
leading coefficient vanishes.

\subsection{The smallest example}

For $n=3$ we have $m=1$, so there is no marking: a source point is a
root, and $K_3$ has geometric degree three. Expanding
\eqref{eq:compiler} gives
\begin{align*}
 C_3&=(1+xy)\bigl(x^2y^2z_3+2xyz_3+z_3+3xy^3+4y^2\bigr),\\
 A_2&=-\tfrac12\bigl(3x^3y^2z_3+6x^2yz_3+3xz_3
       +9x^2y^3+12xy^2+y\bigr),\\
 A_0&=x-\tfrac32x^2y-\tfrac12x^3z_3,
\end{align*}
and $\det JK_3=\frac12$. Note that $C_3=p(p^2z_3+3xy^3+4y^2)$, in
agreement with the diagonal entry $p^3$. For $n\ge4$ each root carries
$m\ge2$ markings, and the markings are the source of the finer
structure studied below: the index-two phenomenon in the monodromy and
the $m-1$ unramified missing divisors of the normalization.

\section{Polynomial coordinates and descent to dimension three}\label{sec:descent}

Throughout this section $n\ge4$ and $c_2=-m/2$. We want new polynomial
coordinates on the source of $K_n$ in which the lower coefficients
$A_2,C_3,\ldots,C_{n-2}$ are themselves coordinates. Away from $x=0$
this is easy: by~\eqref{eq:coefficient-shape}, $C_k$ contains
$z_{k+2}$ with coefficient $x^2$, so one may solve for $z_{k+2}$ after
inverting $x$. The whole difficulty is to avoid the denominators
$x^{-2}$, that is, to control the divisor $x=0$.

Two facts make this possible. By~\eqref{eq:boundary-coefficients},
along $x=0$ every lower coefficient is a source coordinate plus a
function of $y$. And the coefficient of $z_{k+2}$ in $C_k$ is exactly
$x^2$, with no other occurrence of $z_{k+2}$. Lemma~\ref{lem:completion}
below shows that these two facts are enough to turn $C_k$ into a
coordinate. The coefficient $A_2$ has a slightly different shape and
is treated first, by a unimodular matrix.

\subsection{A polynomial completion lemma}

The model case is $f=u+xu^2+x^2z$ in $\C[x,u,z]$. The map
$u\mapsto u+xu^2$ has inverse $v\mapsto v-xv^2$ modulo $x^2$, and one
checks that $w=(u-(f-xf^2))/x^2$ is a polynomial. Then $(x,f,w)$ are
coordinates. The lemma is the general version of this computation.

\begin{lemma}\label{lem:completion}
Let $R$ be a polynomial ring over $\C$ in auxiliary parameters, let
$c\in\C^*$, and suppose
\[
 f=f_0(x,u)+x^2z\in R[x,u,z],\qquad
 f_0(x,u)\equiv cu+\beta+xh(u)\pmod{x^2},
\]
where $\beta\in R$ and $h\in R[u]$. Put
\[
 \psi_x(v)=\frac{v-\beta}{c}-\frac{x}{c}h\left(\frac{v-\beta}{c}\right),
 \qquad \psi_0(v)=\frac{v-\beta}{c}.
\]
Then
\begin{equation}\label{eq:completion}
 (u,z)\longmapsto\left(f,\frac{u-\psi_x(f)}{x^2}\right)=(v,w)
\end{equation}
is a polynomial automorphism over $R[x]$, with inverse
\[
 u=\psi_x(v)+x^2w,\qquad
 z=\frac{v-f_0(x,\psi_x(v)+x^2w)}{x^2}.
\]
Both Jacobian determinants in the two displayed variables are $-1$.
\end{lemma}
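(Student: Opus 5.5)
The plan is to check three things directly: that $w$ is a polynomial, that the two displayed formulas are mutually inverse, and that the Jacobian determinants are $-1$. Throughout, $x$ and the parameters in $R$ are constants, so everything happens in the ring $R[x][u,z]$.

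\emph{Polynomiality of $w$.} This is the only step with content. Since $\psi_x$ is a polynomial in $v$ with coefficients in $R[x]$, we need $u-\psi_x(f)\equiv 0\pmod{x^2}$ in $R[x,u,z]$. Modulo $x^2$ we have $f\equiv cu+\beta+xh(u)$, so $(f-\beta)/c\equiv u+\frac{x}{c}h(u)$. Taylor expansion of $h$ gives $h\bigl((f-\beta)/c\bigr)\equiv h(u)\pmod x$, and the extra factor $x$ in front of $h$ then yields
\[
 \psi_x(f)\equiv u+\tfrac xc h(u)-\tfrac xc h(u)=u \pmod{x^2}.
\]
So $w\in R[x,u,z]$. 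The only point needing care is that the congruence $f\equiv f_0\pmod{x^2}$ holds in the whole ring, including the $z$-term $x^2z$; the Taylor step is a routine identity $h(A+xB)\equiv h(A)\pmod x$.

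\emph{Inverse formulas.} Given $(v,w)$, the definition $w=(u-\psi_x(v))/x^2$ forces $u=\psi_x(v)+x^2w$, which is polynomial. For $z$, the relation $v=f_0(x,u)+x^2z$ forces $z=(v-f_0(x,u))/x^2$. To see that this is a polynomial, we compute modulo $x^2$ with $u=\psi_x(v)+x^2w\equiv\psi_x(v)$:
\[
 f_0(x,u)\equiv c\,\psi_x(v)+\beta+xh(\psi_x(v))
 \equiv (v-\beta)-xh\Bigl(\tfrac{v-\beta}c\Bigr)+\beta+xh\Bigl(\tfrac{v-\beta}c\Bigr)=v.
\]
Here we again use $h(\psi_x(v))\equiv h(\psi_0(v))\pmod x$. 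Both composites are the identity by construction, since each formula was solved from the other. These are identities of rational functions with polynomial outputs, so the map is an automorphism over $R[x]$. Note that $x$ is not inverted in the final statement, because both directions are polynomial.

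\emph{Jacobian.} The matrix $\partial(v,w)/\partial(u,z)$ has entries $\partial_uf$, $\partial_zf=x^2$, $\partial_uw=(1-\psi_x'(f)\partial_uf)/x^2$ and $\partial_zw=-\psi_x'(f)x^2/x^2=-\psi_x'(f)$. Its determinant is
\[
 -\partial_uf\,\psi_x'(f)-x^2\cdot\frac{1-\psi_x'(f)\partial_uf}{x^2}=-1.
\]
The inverse map then has Jacobian determinant $(-1)^{-1}=-1$ as well.
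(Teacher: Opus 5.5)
Your proposal is correct and follows the paper's proof essentially step for step. The paper also shows that $\psi_x$ inverts $g(u)=f_0(x,u)$ modulo $x^2$ on both sides, which gives polynomiality of $w$ and of the inverse formulas; it checks both composites directly and computes the same $2\times2$ Jacobian determinant. Your ``by construction'' justification for the composites is terse, but the missing direct substitution (for instance $f_0(x,u')+x^2z'=v$ for the inverse formulas) is immediate.
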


\begin{proof}
Write $g(u)=f_0(x,u)$. We first show that $\psi_x$ inverts $g$ modulo
$x^2$ on both sides. Since $\psi_x(v)\equiv\psi_0(v)\pmod x$,
\[
 g(\psi_x(v))\equiv c\psi_x(v)+\beta+xh\bigl(\psi_0(v)\bigr)
 =v\pmod{x^2}.
\]
Similarly $(g(u)-\beta)/c\equiv u+xh(u)/c\pmod{x^2}$, hence
\[
 \psi_x(g(u))\equiv u+\frac xch(u)-\frac xch(u)=u\pmod{x^2}.
\]
Since $f\equiv g(u)\pmod{x^2}$ and $\psi_x$ has coefficients in $R[x]$,
we get $u-\psi_x(f)\equiv u-\psi_x(g(u))\equiv0$, so $w$ is a
polynomial. Likewise
$v-g(\psi_x(v)+x^2w)\equiv v-g(\psi_x(v))\equiv0\pmod{x^2}$, so the
proposed inverse is polynomial.

The two maps are mutually inverse. Starting from $(u,z)$, we have
$\psi_x(v)+x^2w=u$ by definition of $w$, and then
$(v-g(u))/x^2=z$ because $v=g(u)+x^2z$. Starting from $(v,w)$, we have
$f=g(u)+x^2z=v$, and then $(u-\psi_x(v))/x^2=w$.

Finally, writing $f_u$ for the partial derivative, the forward Jacobian
is
\[
 \det\begin{pmatrix}
 f_u&x^2\\ (1-\psi_x'(f)f_u)/x^2&-\psi_x'(f)
 \end{pmatrix}
 =-f_u\psi_x'(f)-1+\psi_x'(f)f_u=-1.
\]
The inverse determinant is consequently also $-1$.
\end{proof}

The lemma is applied with $u$ a current coordinate, $z$ a source
coordinate not yet used, and all other coordinates in $R$. Its content
is that $f$ only needs to be a coordinate \emph{to first order along
$x=0$}, with constant slope $c$ in $u$; the next-order term $h$ may be
arbitrary.

\subsection{The first coefficient}

By~\eqref{eq:low-coefficients},
\[
 A_2=x\Bigl(-\frac32p^2z_3+xz_4\Bigr)+g_n(x,y),\qquad
 g_n(x,y)=c_2y-\frac{n(n+1)}2xy^2-\frac{3n}{2}x^2y^3.
\]
Lemma~\ref{lem:completion} does not apply directly, because $z_3$ enters
with only one factor of $x$. Instead, note that the row
$(-\frac32p^2,x)$ is unimodular over $\C[x,y]$, since $p^2\equiv1\pmod
x$. It is the first row of
\[
 M(x,y)=\begin{pmatrix}-3p^2/2&x\\2y+xy^2&-2/3\end{pmatrix},
 \qquad \det M=p^2-2xy-x^2y^2=1.
\]
The second row is the simplest completion: $\det M=1$ forces the
$(2,2)$ entry to be $-2/3$ modulo $x$, and then the $(2,1)$ entry must be
$(p^2-1)/x=2y+xy^2$. The coordinate change is built in three steps.
\begin{enumerate}
\item Put $(U,s_4)^\mathsf{t}=M(z_3,z_4)^\mathsf{t}$. Then
$A_2=xU+g_n(x,y)$.
\item Since $g_n-c_2y$ is divisible by $x$, put
$s_3=U+(g_n-c_2y)/x$. Then $A_2=c_2y+xs_3$.
\item Put $v_2=c_2y+xs_3$. Since $c_2\ne0$, this replaces $y$ by
$v_2=A_2$.
\end{enumerate}
Each step is a polynomial automorphism fixing $x,z_5,\ldots,z_n$, with
Jacobian determinants $1$, $1$ and $c_2$. Explicitly, the inverse is
\begin{align}
 Y&=(v_2-xs_3)/c_2,& U&=s_3-(g_n(x,Y)-c_2Y)/x,\notag\\
 y&=Y,&z_3&=-2U/3-xs_4,\label{eq:first-chart}\\
 &&z_4&=-(2Y+xY^2)U-3(1+xY)^2s_4/2,\notag
\end{align}
valid on the entire source, including $x=0$. Along $x=0$ it reads
\begin{equation}\label{eq:first-chart-boundary}
 y\equiv\frac{v_2}{c_2},\qquad
 z_3\equiv-\frac23s_3+\beta_3(v_2),\qquad
 z_4\equiv-\frac32s_4+\beta_4(v_2,s_3)\pmod x,
\end{equation}
with $\beta_3=-\frac{n(n+1)}3(v_2/c_2)^2$ and
$\beta_4=-\frac{2v_2}{c_2}\bigl(s_3+\frac{n(n+1)}2(v_2/c_2)^2\bigr)$.
So along $x=0$ the original $z_3$ and $z_4$ are, up to offsets, the new
coordinates $s_3$ and $s_4$ with the nonzero \emph{slopes} $-2/3$ and
$-3/2$.

For $n=4$ this completes the descent: the source coordinates of
$E_{4,\lambda}$ are $(x,s_3,s_4)$. For example, at $\lambda=0$ one has
$y=xs_3$ and $p=1+x^2s_3$, and, writing $(u,w)=(s_3,s_4)$,
\begin{align*}
 b&=x-\tfrac53x^3u+\tfrac12x^4w+\tfrac{10}3x^5u^2+2x^7u^3,\\
 q&=-\tfrac12\,p\,B,\qquad r=\tfrac13\bigl(2xB-x^2u^2-2u\bigr),\\
 B&=3w+4xu^2+9x^2uw+48x^3u^3+9x^4u^2w+86x^5u^4\\
  &\quad+3x^6u^3w+56x^7u^5+12x^9u^6.
\end{align*}
The map $E_{4,0}=(r,q,b)$ has Jacobian determinant $1$ in the variables
$(x,u,w)$, and geometric degree eight.

\subsection{The uniform induction}

For $n\ge5$ we now straighten $C_3,\ldots,C_{n-2}$ one at a time. The
coordinate $s_k$ below is the current coordinate occupying the slot of
$z_k$; the letters $z_j$ always denote the original coordinate
functions. Congruences modulo $x$ are taken in the ideal generated by
$x$, which is the same ideal in every coordinate system constructed
here, since each change fixes $x$.

\begin{proposition}\label{prop:induction}
Let $n\ge5$ and $3\le k\le n-2$. Suppose that polynomial coordinates
\[
 (x,\ v_2,\ldots,v_{k-1},\ s_k,\ s_{k+1},\ z_{k+2},\ldots,z_n)
\]
on the source have been constructed, with $v_2=A_2$ and $v_j=C_j$,
such that:
\begin{enumerate}
\item[\textup{(a)}] $\C[x,y,z_3,\ldots,z_{k+1}]
 =\C[x,v_2,\ldots,v_{k-1},s_k,s_{k+1}]$;
\item[\textup{(b)}] there are constants $c_k,c_{k+1}\in\C^*$ with
\[
 z_k\equiv c_ks_k+\beta_k,\qquad
 z_{k+1}\equiv c_{k+1}s_{k+1}+\beta_{k+1}\pmod x,
\]
where $\beta_k\in\C[v_2,\ldots,v_{k-1}]$ and
$\beta_{k+1}\in\C[v_2,\ldots,v_{k-1},s_k]$.
\end{enumerate}
Then Lemma~\ref{lem:completion} applies to $f=C_k$, with $u=s_k$ and
$z=z_{k+2}$. The resulting coordinates
\[
 (x,\ v_2,\ldots,v_k,\ s_{k+1},\ s_{k+2},\ z_{k+3},\ldots,z_n),
 \qquad v_k=C_k,\quad s_{k+2}=w,
\]
satisfy \textup{(a)} and \textup{(b)} with $k$ replaced by $k+1$, and
$c_{k+2}=-c_k$.
\end{proposition}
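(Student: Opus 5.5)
We apply Lemma~\ref{lem:completion} with base ring $R=\C[v_2,\ldots,v_{k-1},s_{k+1},z_{k+3},\ldots,z_n]$, distinguished variable $u=s_k$, and $z=z_{k+2}$. The plan has three steps: put $C_k$ in the form the lemma requires, read off $c$, $\beta$, $h$, and then check that (a) and (b) propagate to the new coordinates.

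\emph{Shape of $C_k$.} By~\eqref{eq:coefficient-shape},
\[
C_k=p^{\varepsilon_k}z_k-2px\,z_{k+1}+x^2z_{k+2}+\gamma_k(x,y).
\]
The variable $z_{k+2}$ appears nowhere else in this expression. By hypothesis (a), the functions $y,z_3,\ldots,z_{k+1}$ are polynomials in $x,v_2,\ldots,v_{k-1},s_k,s_{k+1}$. Hence $f_0:=C_k-x^2z_{k+2}$ lies in $R[x,s_k]$, and $f=f_0+x^2z_{k+2}$ has the form required by the lemma.

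\emph{Reduction modulo $x$.} Since $p\equiv1\pmod x$, we get $f_0\equiv z_k+\gamma_k(0,y)\pmod x$. By (b), $z_k\equiv c_ks_k+\beta_k$. The value of $y$ modulo $x$ is $v_2/c_2$, by~\eqref{eq:first-chart-boundary}; this is a congruence that persists, because each later change fixes $x$ and preserves the ideal $(x)$. So $f_0\equiv c_ks_k+\beta\pmod x$ with $\beta\in\C[v_2,\ldots,v_{k-1}]\subset R$.

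\emph{The first-order term.} The lemma needs $f_0\equiv cs_k+\beta+xh(s_k)\pmod{x^2}$ with $h\in R[s_k]$. This is automatic: $f_0-c_ks_k-\beta$ is a polynomial in $R[x,s_k]$ that is divisible by $x$, so it equals $x$ times an element of $R[x,s_k]$, and we take $h$ to be that quotient reduced modulo $x$. No special structure of the $x$-linear term is needed. Thus Lemma~\ref{lem:completion} applies with $c=c_k\ne0$ and produces the automorphism $(s_k,z_{k+2})\mapsto(v_k,w)$ over $R[x]$. In particular $v_k=C_k$ is now a coordinate.

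\emph{Hypothesis (a) for $k+1$.} Adjoin $z_{k+2}$ to both sides of (a). The lemma gives
\[
\C[x,v_2,\ldots,v_{k-1},s_k,s_{k+1},z_{k+2}]=\C[x,v_2,\ldots,v_k,s_{k+1},s_{k+2}],
\]
since $s_{k+1}$ belongs to the base ring $R$ of the lemma.

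\emph{Hypothesis (b) for $k+1$.} For $z_{k+1}$ we substitute $s_k=\psi_x(v_k)+x^2w$, which reduces modulo $x$ to $\psi_0(v_k)=(v_k-\beta)/c_k$. This turns $\beta_{k+1}$ into a polynomial in $v_2,\ldots,v_k$, and the slope $c_{k+1}$ is unchanged.

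For $z_{k+2}$, the lemma gives $z_{k+2}=(v-f_0(x,\psi_x(v)+x^2w))/x^2$, and we must find its residue modulo $x$. Expand $f_0(x,\psi_x(v)+x^2w)$ modulo $x^3$. Writing $u_0=\psi_x(v)$, the $w$-dependent part is $x^2w\,\partial_uf_0(x,u_0)+O(x^4)$, and $\partial_uf_0\equiv c_k\pmod x$. Therefore
\[
z_{k+2}\equiv -c_kw+\bigl(\text{terms independent of }w\bigr)\pmod x.
\]
This gives $c_{k+2}=-c_k$.

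\emph{Main obstacle.} The delicate point is the last claim: that the $w$-independent remainder $(v-f_0(x,u_0))/x^2 \bmod x$ depends only on $v_2,\ldots,v_k,s_{k+1}$, and not on $z_{k+3},\ldots,z_n$. This holds because $f_0$ involves only $x$, $y$, $z_3,\ldots,z_{k+1}$, and none of these depend on $z_{k+3},\ldots,z_n$. The residue $\beta_{k+2}$ therefore lies in $\C[v_2,\ldots,v_k,s_{k+1}]$, which is exactly what (b) requires at step $k+1$.
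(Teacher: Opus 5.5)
Your proposal is correct and follows essentially the same route as the paper: you write $C_k=f_0+x^2z_{k+2}$ with $f_0$ in the ring supplied by (a), read off the slope $c_k$ and offset $\beta$ from $y\equiv v_2/c_2$ and (b), apply Lemma~\ref{lem:completion}, and get $c_{k+2}=-c_k$ from the Taylor expansion of $f_0(x,\psi_x(v_k)+x^2w)$. The only difference is that you put $z_{k+3},\ldots,z_n$ into the base ring and check at the end that they do not occur, whereas the paper works over the smaller ring $\C[v_2,\ldots,v_{k-1},s_{k+1}]$ from the start; that choice makes it visible that the new condition (a), and not only the offset, is free of the unused coordinates.
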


\begin{proof}
\emph{Shape of $C_k$.} By~\eqref{eq:coefficient-shape},
$C_k=f_0+x^2z_{k+2}$, where $f_0$ is a polynomial in $x,y,z_k,z_{k+1}$.
By (a), $f_0\in R_k[x,u]$, where
\[
 R_k=\C[v_2,\ldots,v_{k-1},s_{k+1}],\qquad u=s_k,
\]
and $z_{k+2}$ is an independent coordinate. We apply
Lemma~\ref{lem:completion} over this smaller coefficient ring $R_k$;
the unused coordinates $z_{k+3},\ldots,z_n$ are simply adjoined and
fixed. In particular neither the forward formulas nor the inverse
formulas can introduce dependence on these unused coordinates.

\emph{The slope.} Along $x=0$,~\eqref{eq:boundary-coefficients} gives
$C_k=z_k+\varphi_k(y)$ for an explicit polynomial $\varphi_k$, and
$A_2=c_2y$, so $y\equiv v_2/c_2\pmod x$. By (b),
\[
 f_0\equiv c_ks_k+\beta_k+\varphi_k(v_2/c_2)\pmod x .
\]
This is linear in $u=s_k$ with the constant slope $c_k\ne0$ and offset
$\beta=\beta_k+\varphi_k(v_2/c_2)\in\C[v_2,\ldots,v_{k-1}]\subset R_k$.
Expanding in the independent coordinate $x$ gives
\[
 f_0(x,u)\equiv c_ku+\beta+xh(u)+x^2j(u)\pmod{x^3},
 \qquad h,j\in R_k[u].
\]
Thus the first-order term required by the lemma exists over $R_k$.
The offset $\beta$ is independent of $s_{k+1}$, although $h$ and $j$
may depend on it. The lemma applies with $c=c_k$.

\emph{Condition (a).} The lemma replaces $(s_k,z_{k+2})$ by
$(v_k,s_{k+2})$ over $R_k[x]$, and its formulas involve only $x$,
$v_2,\ldots,v_{k-1}$ and $s_{k+1}$ besides the two exchanged pairs.
Adjoining $z_{k+2}$ to both sides of (a) therefore gives
$\C[x,y,z_3,\ldots,z_{k+2}]=\C[x,v_2,\ldots,v_k,s_{k+1},s_{k+2}]$.

\emph{Condition (b).} The inverse formula gives
$s_k=\psi_x(v_k)+x^2s_{k+2}\equiv\psi_0(v_k)=(v_k-\beta)/c_k\pmod x$.
Substituting into
$\beta_{k+1}$ shows
$z_{k+1}\equiv c_{k+1}s_{k+1}+\beta'_{k+1}$ with
$\beta'_{k+1}\in\C[v_2,\ldots,v_k]$. For the new slot, Taylor expansion
in the second argument gives
\[
 f_0\bigl(x,\psi_x(v_k)+x^2s_{k+2}\bigr)
 \equiv f_0\bigl(x,\psi_x(v_k)\bigr)+x^2s_{k+2}\,\partial_uf_0(0,\psi_0(v_k))
 \pmod{x^3},
\]
and $\partial_uf_0(0,\cdot)=c_k$. Hence the inverse formula for $z$
yields
\[
 z_{k+2}\equiv -c_k\,s_{k+2}+\beta_{k+2}\pmod x,\qquad
 \beta_{k+2}=\Bigl[\frac{v_k-f_0(x,\psi_x(v_k))}{x^2}\Bigr]_{x=0}.
\]
More explicitly, writing $\alpha=(v_k-\beta)/c_k$ and expanding
$f_0(x,\alpha-xh(\alpha)/c_k)$ to order two gives
\begin{equation}\label{eq:induction-offset}
 \beta_{k+2}=\frac{h'(\alpha)h(\alpha)}{c_k}-j(\alpha)
 \in\C[v_2,\ldots,v_k,s_{k+1}],
\end{equation}
where the prime denotes differentiation in $u$. Thus the new offset
has precisely the allowed dependence, is independent of $s_{k+2}$,
and the new slope is the nonzero constant $c_{k+2}=-c_k$.
\end{proof}

By~\eqref{eq:first-chart} and~\eqref{eq:first-chart-boundary}, the
hypotheses hold for $k=3$, with $c_3=-2/3$ and $c_4=-3/2$. Applying Proposition~\ref{prop:induction} for
$k=3,\ldots,n-2$ produces coordinates
\[
 (x,\ A_2,C_3,\ldots,C_{n-2},\ s_{n-1},\ s_n),
\]
and the slopes used along the way are
\[
 c_3,c_4,c_5,c_6,\ldots=-\tfrac23,\ -\tfrac32,\ \tfrac23,\ \tfrac32,\
 -\tfrac23,\ldots,
\]
all nonzero constants. Every change fixes $x$ and is defined over the
polynomial ring in the coefficient coordinates obtained so far. Keeping
those coefficients as variables, rather than substituting numerical
values, gives a polynomial automorphism of the entire source. The
offsets are explicit but grow quickly; only their dependence matters.
Because these are identities over the polynomial ring in the coefficient
coordinates, specializing any prefix to any complex values preserves
both inverse identities. No exceptional parameter values are excluded.

\begin{proof}[Proof of Theorem~\ref{thm:descent-main}]
Let $\Theta$ be the inverse of the composite coordinate change
constructed above, so that it expresses the original source coordinates
in the new ones. In these coordinates the lower coefficients are coordinate
functions, so, after reordering source and target coordinates,
\[
 K_n\circ\Theta:(\lambda,\xi)\longmapsto(\lambda,E_{n,\lambda}(\xi)),
 \qquad \xi=(x,s_{n-1},s_n).
\]
Its Jacobian matrix is block lower triangular, with an identity block
for $\lambda$ and the block $\partial E_{n,\lambda}/\partial\xi$. Since
$\det JK_n=m/2$ and $\det J\Theta$ is a nonzero constant, so is
$\det\partial E_{n,\lambda}/\partial\xi$, for every $\lambda$.

The sign can be tracked. Each change keeps its new coordinates in the
slots of the old ones, so in the source order
$(x,v_2,\ldots,v_{n-2},s_{n-1},s_n)$ the automorphism $\Theta^{-1}$ has
determinant $c_2(-1)^{n-4}$, and $K_n\circ\Theta$, with outputs in the
order of $K_n$, has determinant $(m/2)/(c_2(-1)^n)=(-1)^{n+1}$. Moving
$A_2$ to the front of the outputs contributes $(-1)^{n-2}$, and moving
$x$ behind the coefficient coordinates contributes $(-1)^{n-3}$. Hence,
with $\xi=(x,s_{n-1},s_n)$ and output order $(C_{n-1},C_n,A_0)$,
\[
 \det\frac{\partial E_{n,\lambda}}{\partial\xi}=(-1)^n .
\]

Because $\Theta$ is bijective, the fibre of $E_{n,\lambda}$ over
$(r,q,b)$ is in bijection with the fibre of $K_n$ over the
polynomial~\eqref{eq:pencil}, and~\eqref{eq:fibre-count} applies
unchanged. For every fixed $\lambda$ there are polynomials
in~\eqref{eq:pencil} with $q\ne0$, $b\ne0$ and $n$ distinct roots:
fix $q\ne0$ and $r$, and choose $b\ne0$ outside the finitely many
critical values of $qT^n+rT^{n-1}+\sum\lambda_jT^j+T$. Such targets form
a nonempty Zariski open set, over which the fibre has $nm$ points.
Hence the geometric degree is $nm$ for every parameter. Stopping the
induction after fixing any prefix of the lower coefficients gives the
same conclusion for the intermediate descents.
\end{proof}

The recursive formulas specify the maps without expanding their
coordinates into large monomial lists. They also explain why a generic
rational elimination would not suffice: solving $C_k=\lambda_k$ for
$z_{k+2}$ introduces the denominator $x^2$, and the completion lemma is
precisely what removes it.

\section{Exact generic monodromy}\label{sec:monodromy}

The generic monodromy of a dominant, generically finite map is the
Galois group of the normal closure of the source function field over
the target function field, acting on the embeddings of the source
field. Equivalently, it is the monodromy of the finite covering over a
sufficiently small nonempty Zariski open set of the target.

\subsection{Overview}

Let $k$ be the target function field of $K_n$ or of one of its
descents, and let $P\in k[T]$ be the generic target polynomial. By
Section~\ref{sec:root}, the source function field is $k(t,a)$, where
$t$ is a root of $P$ and $a^m=P'(t)$: indeed $x=t/a$ and $y=(1-a)/t$
by~\eqref{eq:marked-chart}, and the $z_j$ are then rational in $x,y$
and the target coefficients by the triangular system. The $nm$ sheets
over a generic point are the pairs $(t_i,\zeta a_i)$, where
$t_1,\ldots,t_n$ are the roots, $a_i$ is one chosen $m$-th root of
$d_i=P'(t_i)$, and $\zeta\in\mu_m$.

A monodromy element permutes the roots and then rotates each marking
by a root of unity. This gives the upper bound $\mu_m^n\rtimes S_n$,
the wreath product $\mu_m\wr S_n$. The question is which rotations
actually occur. By Kummer theory, this is governed by multiplicative
relations among the $d_i$ modulo $m$-th powers. We detect such relations
with valuations at which exactly two roots collide
(Lemma~\ref{lem:pair}), and we produce these valuations, together with
the full symmetric group on the roots, from a single line in the target
space (Lemma~\ref{lem:morse}). The only relation that survives comes
from the classical identity~\eqref{eq:derivative-product} between the
product of the $d_i$ and the discriminant. It exists exactly when $m$
is even, and it cuts the group down by a factor of two.

\subsection{Pair valuations and the derivative-root extension}

\begin{lemma}\label{lem:pair}
Let $k$ be a function field over $\C$ and
$P=q\prod_{i=1}^n(T-t_i)\in k[T]$, with splitting field $\tilde k$ and
$\Gal(\tilde k/k)=S_n$. Put $m=n-2$. Suppose $\tilde k$ has a discrete
valuation, trivial on $\C$, at which $q$ is a unit, one root difference
has value one, and every other root difference has value zero. Let
$L=\tilde k(a_1,\ldots,a_n)$ with $a_i^m=P'(t_i)$. Then $L/k$ is Galois, and
$\Gal(L/k)$, acting on the $nm$ pairs $(t_i,\zeta a_i)$, is the group
$G_n$ of~\eqref{eq:group}. This action is transitive, so the field
$k(t_1,a_1)$ has degree $nm$ over $k$ and $L$ is its normal closure.
\end{lemma}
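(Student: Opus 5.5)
The plan is to run Kummer theory over the splitting field $\tilde k$ and to pin down the group in three steps: an upper bound from the shape of the action, one explicit relation when $m$ is even, and a lower bound from the pair valuation.

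\emph{Normality and embedding.} Put $d_i=P'(t_i)\in\tilde k^*$; these are nonzero because $\Gal(\tilde k/k)=S_n$ forces the roots to be distinct. The set $\{d_i\}$ is permuted by $\Gal(\tilde k/k)$, and $\mu_m\subset\C\subset k$. Hence $L$ is the splitting field over $k$ of $P$ together with $\prod_i(X^m-d_i)\in k[X]$, so $L/k$ is Galois. Any $g\in\Gal(L/k)$ induces some $\sigma\in S_n$ on the roots. It then sends $a_i$ to an $m$-th root of $d_{\sigma(i)}$, that is, to $\zeta_i a_{\sigma(i)}$ for some $\zeta_i\in\mu_m$. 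This embeds $\Gal(L/k)$ into $\mu_m^n\rtimes S_n$, compatibly with the action on the pairs $(t_i,\zeta a_i)$. The composite with the projection to $S_n$ is surjective, and its kernel is $N=\Gal(L/\tilde k)$. By Kummer theory, $N\simeq\operatorname{Hom}(D,\mu_m)$, where $D$ is the subgroup of $\tilde k^*/\tilde k^{*m}$ generated by the $d_i$. Therefore $|\Gal(L/k)|=n!\,|D|$.

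\emph{The relation for even $m$.} Put $\delta=\prod_{i<j}(t_i-t_j)\in\tilde k$, so that $\sigma(\delta)=\sgn(\sigma)\delta$. The classical identity gives $\prod_i d_i=q^n\prod_{i\ne j}(t_i-t_j)=\epsilon\,q^n\delta^2$ with $\epsilon=\pm1$. When $m$ is even, write $q^n=q^m q^2$. Then
\[
 \bigl((\textstyle\prod_i a_i)^{m/2}q^{-m/2}\bigr)^2=\epsilon\,(q\delta)^2,
\]
so $(\prod_i a_i)^{m/2}=\eta\, q^{m/2+1}\delta$ for a constant $\eta$ with $\eta^2=\epsilon$. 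Now apply $g=(\zeta;\sigma)$ to this equation. The left side is multiplied by $(\prod_i\zeta_i)^{m/2}$, and the right side by $\sgn(\sigma)$, because $q$ and $\eta$ lie in $k$. This gives $\Gal(L/k)\subseteq G_n$ for $n$ even. For $n$ odd we only have the trivial inclusion into $\mu_m\wr S_n=G_n$.

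\emph{Lower bound via valuations.} Suppose $\prod_i d_i^{e_i}\in\tilde k^{*m}$. Let $v$ be the given valuation, and say $v(t_1-t_2)=1$. Because $q$ is a unit and every other root difference is a unit, we get $v(d_1)=v(d_2)=1$ and $v(d_i)=0$ for $i\ge3$. Applying $v$ yields $e_1+e_2\equiv0\pmod m$. For $\tau\in\Gal(\tilde k/k)=S_n$, the composite $v\circ\tau$ is again a valuation of the same kind, now with colliding pair $\tau^{-1}\{1,2\}$. Since $S_n$ acts transitively on pairs, we get $e_i+e_j\equiv0$ for all $i\ne j$. Because $n\ge3$, comparing three indices gives $2e_i\equiv0$ and all $e_i$ congruent. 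So the relation lattice is trivial for odd $m$, and is generated by $(m/2,\ldots,m/2)$ for even $m$. This gives $|D|=m^n/\gcd(2,m)$ and $|\Gal(L/k)|=|G_n|$. Combined with the inclusion, $\Gal(L/k)=G_n$.

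\emph{Transitivity.} The $S_n$-part is transitive on roots. The exponent vector $(1,0,\ldots,0)$ is not a relation, so $d_1$ has order $m$ in $D$, and $N$ rotates $a_1$ through all of $\mu_m$. Hence the action on the $nm$ pairs is transitive. The stabilizer of $(t_1,a_1)$ then has index $nm$ and fixed field $k(t_1,a_1)$, which therefore has degree $nm$ over $k$. The normal closure of $k(t_1,a_1)$ contains every conjugate pair, hence all $t_i$ and $a_i$, so it equals $L$.

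I expect the main obstacle to be bookkeeping rather than a conceptual step. Two points need care. First, the transported valuations $v\circ\tau$ must genuinely realize every pair, which uses that $\Gal(\tilde k/k)$ is the full $S_n$. Second, the even-case relation must be stated with the right power of $q$: here $n\equiv m\pmod 2$ makes $q^n$ a square times an $m$-th power, which is what lets $q$ be absorbed.
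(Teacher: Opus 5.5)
Your proposal is correct and follows the paper's proof essentially step for step. The shared ingredients are Kummer theory over $\tilde k$, the pair valuation transported by $S_n$ to force all exponents equal with $2e_i\equiv0$, the identity $\prod_i d_i=\pm q^n\delta^2$ (your $\eta q^{m/2+1}\delta$ is the paper's $B\,q^{n/2}\delta$) to place the group inside $G_n$, and a final order count. The differences are cosmetic: you prove the inclusion before the degree, and you get transitivity from $d_1$ having order $m$ in $D$ rather than from the explicit element $(\zeta,\zeta^{-1},1,\ldots,1;1)\in G_n$. For that step you need that no $(\ell,0,\ldots,0)$ with $\ell\not\equiv0$ is a relation, not merely $(1,0,\ldots,0)$, but your computed relation lattice does give this.
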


\begin{proof}
\emph{Possible relations.} Set $d_i=P'(t_i)=q\prod_{j\ne i}(t_i-t_j)$.
At the given valuation, if $t_1-t_2$ is the colliding difference, then
$d_1$ and $d_2$ have value one and every other $d_i$ has value zero.
Since $\Gal(\tilde k/k)=S_n$ is transitive on unordered pairs, composing the
valuation with Galois automorphisms gives, for each pair $\{i,j\}$, a
valuation at which $d_i,d_j$ have value one and the others value zero.
Now consider the group of relations
\[
 \mathcal R=\left\{(\ell_i)\in(\Z/m)^n:\prod_i d_i^{\ell_i}\in\tilde k^{\times m}\right\}.
\]
An $m$-th power has value divisible by $m$, so every relation satisfies
$\ell_i+\ell_j\equiv0\pmod m$ for every pair. Comparing three distinct indices
shows that all $\ell_i$ are equal and $2\ell_i\equiv0$. When $m$ is odd only
the zero relation is possible. When $m$ is even there is at most one
further relation, namely $(m/2,\ldots,m/2)$.

\emph{The relation exists for even $m$.} Let
$\delta=\prod_{i<j}(t_i-t_j)$. Expanding the definition of $d_i$ gives
\begin{equation}\label{eq:derivative-product}
 \prod_i d_i=q^n\prod_i\prod_{j\ne i}(t_i-t_j)=(-1)^{n(n-1)/2}q^n\delta^2 .
\end{equation}
When $m$ is even, $n$ is even too, and the $(m/2)$-th power of the right
side equals $\pm(q^{n/2}\delta)^m$, an $m$-th power in $\tilde k$ because complex
constants have all roots. So $|\mathcal R|=\gcd(2,m)$.

\emph{The degree.} The field $\tilde k$ contains $\mu_m$, so Kummer theory
\cite[Theorem 10.3.7]{SharifiKummerDuality} identifies $\Gal(L/\tilde k)$
with the dual of the subgroup of $\tilde k^\times/\tilde k^{\times m}$
generated by the
$d_i$. That subgroup is $(\Z/m)^n/\mathcal R$, so
\[
 [L:\tilde k]=\frac{m^n}{|\mathcal R|}=\frac{m^n}{\gcd(2,m)}.
\]

\emph{The group.} The set $\{d_1,\ldots,d_n\}$ is $\Gal(\tilde k/k)$-stable,
so $L$ is Galois over $k$, and every automorphism sends
$a_i\mapsto\zeta_ia_{\sigma(i)}$ for some $\zeta_i\in\mu_m$ and
$\sigma\in S_n$. Thus $\Gal(L/k)$ embeds in $\mu_m^n\rtimes S_n$, with
order $n!\,m^n/\gcd(2,m)$. For odd $m$ this is the whole group. For even
$m$, consider
\[
 B=\frac{\prod_i a_i^{m/2}}{q^{n/2}\delta}.
\]
By~\eqref{eq:derivative-product}, $B^2=(-1)^{n(n-1)/2}$, so $B\in\C$ is
fixed by every automorphism. The element $(\zeta;\sigma)$ multiplies
the numerator by $(\prod_i\zeta_i)^{m/2}$ and $\delta$ by $\sgn(\sigma)$, so
it fixes $B$ exactly when $(\prod_i\zeta_i)^{m/2}=\sgn(\sigma)$. These
elements form a subgroup of index two, the kernel of a surjective
character, whose order equals the degree already computed. Hence
$\Gal(L/k)=G_n$.

\emph{Transitivity.} The permutations $\sigma$ move any root block to
any other. Within one block, the element with $\zeta_1=\zeta$,
$\zeta_2=\zeta^{-1}$, all other $\zeta_i=1$ and $\sigma=1$ lies in $G_n$
and rotates the first marking by $\zeta$. So $G_n$ is transitive on
the $nm$ pairs. The conjugates of $(t_1,a_1)$ generate $L$, which is
therefore the normal closure of $k(t_1,a_1)$. For $m=1$ the lemma
simply describes the root splitting field.
\end{proof}

\subsection{A Morse line for every fixed coefficient tuple}

A polynomial $g\in\C[T]$ is called \emph{Morse} if its critical points
are simple and its critical values are pairwise distinct. The
branched cover $g:\A^1_T\to\A^1_b$ of such a polynomial has local
monodromy a transposition around each critical value, and its monodromy
group is the full symmetric group
\cite[Theorem 4.4.5]{Serre1992TopicsGalois}; the argument is recalled
below.

\begin{lemma}\label{lem:morse}
For any $\lambda_2,\ldots,\lambda_{n-2}\in\C$, the polynomial
\eqref{eq:pencil} over $\C(r,q,b)$ has root group $S_n$ and a
valuation satisfying the hypotheses of Lemma~\ref{lem:pair}.
\end{lemma}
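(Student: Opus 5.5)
The plan is to specialize to a line in the target: fix $\lambda$, choose $q_0\ne0$ and $r_0$, and study $P=g(T)-b$ over $\C(b)$, where
\[
 g(T)=q_0T^n+r_0T^{n-1}+\sum_{j=2}^{n-2}\lambda_jT^j+T .
\]
Once we know that $g$ can be made Morse, two consequences follow.
\begin{itemize}
\item Root group. The root group of $g(T)-b$ over $\C(b)$ is $S_n$ by the cited result \cite[Theorem 4.4.5]{Serre1992TopicsGalois}. Recall the argument: local monodromy at each critical value is a transposition; the group is transitive because $g(T)-b$ is irreducible, being linear in $b$; a transitive group generated by transpositions is $S_n$.
\item Valuation. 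At a simple critical value $b_0$, with critical point $\tau$, exactly two roots collide. Let $w$ be the valuation of the splitting field extending $b-b_0$, and normalize it so that $w(b-b_0)=2$; it has ramification index $2$ and is trivial on $\C$. Near $b_0$ we have $t_{1,2}=\tau\pm c(b-b_0)^{1/2}+\cdots$, so $t_1-t_2$ has value $1$. All other roots stay distinct from each other and from $\tau$, since the critical value is not repeated, so every other difference is a unit. Finally $q=q_0$ is a unit.
\end{itemize}

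We then pass back to the generic field $\C(r,q,b)$. The Galois group of the generic polynomial contains the group of any specialization. Concretely, specialize along $q=q_0$, $r=r_0$ through a place of $\C(r,q,b)$ and its extension to the splitting field. Because the discriminant does not vanish identically on the line, the decomposition group maps onto the specialized group. So the root group over $\C(r,q,b)$ is $S_n$, as it is already at most $S_n$.

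For the valuation over $\C(r,q,b)$, compose the two-step place: first restrict to the line $q=q_0$, $r=r_0$ (a rank-one place extended to $\tilde k$), then take $w$. A composite of places has higher rank, so the better route is the following. Consider the divisor of the discriminant in $\A^3_{r,q,b}$, and take the valuation of $\tilde k$ centered over the generic point of a component $\Delta$ of $\{\disc=0\}$ passing through $(r_0,q_0,b_0)$. This is a rank-one discrete valuation trivial on $\C$. At the generic point of $\Delta$ the polynomial has exactly one double root, with the other roots distinct and $q\ne0$, since this holds at the point of the Morse line and is an open condition along $\Delta$. The local computation above then applies over the generic point of $\Delta$, after normalizing the ramification index $2$ so that the root difference has value one.

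The main obstacle is that $g$ must be Morse for every fixed $\lambda$, with the freedom only in $q_0,r_0$. The plan is to take $r_0=0$ and $q_0$ generic. The critical points of $g$ satisfy $nq_0T^{n-1}+h'(T)=0$, where $h=\sum\lambda_jT^j+T$ has $h'(0)=1$, so every critical point is nonzero.
\begin{itemize}
\item As $q_0\to0$ with the substitution $T=q_0^{-1/(n-2)}S$, all $n-1$ critical points escape to infinity, asymptotic to the distinct roots $S^{n-1}=\text{const}$ when the top lower coefficient is nonzero. In general they are governed by a Newton polygon in which the critical points split into a bounded cluster (critical points of $h$) and a distinct escaping part.
\item To avoid this case analysis, reverse the roles of $q_0$ and $T$: substitute $T=\epsilon U$ with $q_0=\epsilon^{-(n-1)}$, so that $g(\epsilon U)/\epsilon=U^n+U+O(\epsilon)$. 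The polynomial $U^n+U$ is Morse: its critical points $U^{n-1}=-1/n$ are simple, and the critical values $\frac{n-1}{n}U$ are distinct.
\end{itemize}
The Morse property is open, so for small $\epsilon$ the polynomial $g$ is Morse (rescaling affects neither simplicity nor distinctness). This settles the obstacle uniformly in $\lambda$.
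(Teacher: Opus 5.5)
Your proposal is correct and follows essentially the paper's route. It uses the same Morse member ($r=0$, $q=\varepsilon^{-(n-1)}$, $T=\varepsilon z$), the same transposition-plus-transitivity argument on the $b$-line lifted to $\C(r,q,b)$ by specialization, and the same divisorial valuation $v_\Delta$ along the discriminant component through the Morse point. The step you should state explicitly is $v_\Delta(\disc P)=1$, which holds because the discriminant restricted to your Morse line has a simple zero at $b_0$, so $\Delta$ is smooth there and transverse to the line; this is what makes the ramification index two and $t_1-t_2$ a uniformizer at the generic point of $\Delta$, and the paper makes it rigorous with a Hensel factorization $P=Q\tilde R$ with $v_\Delta(\disc Q)=1$.
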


\begin{proof}
\emph{A Morse member.} Write $P_0(T)=T+\sum_{j=2}^{n-2}\lambda_jT^j$ for
the fixed part of~\eqref{eq:pencil}. Set $r=0$, $q=\varepsilon^{-(n-1)}$,
and $T=\varepsilon z$. Then
\[
 \frac{qT^n+P_0(T)}{\varepsilon}
 =z^n+z+\sum_{j=2}^{n-2}\lambda_j\varepsilon^{j-1}z^j.
\]
At $\varepsilon=0$ this is $z^n+z$. Its $n-1$ critical points satisfy
$z^{n-1}=-1/n$; they are simple, and their critical values
$z(z^{n-1}+1)=(n-1)z/n$ are distinct. Both properties are open, so
they persist for sufficiently small nonzero $\varepsilon$. Fix such a
value of $q$. Then $g(T)=qT^n+P_0(T)$ is Morse.

\emph{The root group.} Along the line $\{r=0,\ q\text{ fixed}\}$ in the
target, the roots of~\eqref{eq:pencil} are the points of
$g^{-1}(b)$. The fundamental group of the $b$-line minus the critical
values is generated by loops around them, whose monodromy consists of
transpositions. Since $\A^1_T$ is connected, the generated group is
transitive. A transitive subgroup of $S_n$ generated by transpositions
is $S_n$. The monodromy along the line is a subgroup of the root
monodromy over the whole target, which is therefore $S_n$ as well.

\emph{The valuation.} At a critical value $b_0$, the polynomial
$g(T)-b_0$ has exactly one double root and $n-2$ simple roots. Along the
$b$-line the discriminant of $g(T)-b$ is a constant multiple of
$\prod(b-b_j)$ over the critical values $b_j$, so it has a simple zero
at $b_0$. Hence the discriminant hypersurface $\Delta$ is smooth at this
point and transverse to the line, and its divisorial valuation $v_\Delta$
on $k$ satisfies $v_\Delta(\disc P)=1$. Modulo $v_\Delta$, $P$ has one
double root $t_0$ and $n-2$ simple roots. Over the completion $\hat k$ of
$k$ at $v_\Delta$, Hensel's lemma therefore factors $P=Q\tilde R$, with
$Q$ quadratic reducing to a multiple of $(T-t_0)^2$ and $\tilde R$
separable modulo $v_\Delta$ and coprime to $T-t_0$. The roots of
$\tilde R$ generate an unramified extension, and their residues are
distinct from each other and from $t_0$. Since the resultant of $Q$ and
$\tilde R$ and the discriminant of $\tilde R$ are units,
$\disc Q$ has value one. It is not a square in $\hat k$, so $Q$ is
irreducible and generates a ramified quadratic extension. Hence any
extension $w$ of $v_\Delta$ to $\tilde k$, normalized with value group
$\Z$, has ramification index two, and $w\bigl((t_1-t_2)^2\bigr)=2$ for
the two roots $t_1,t_2$ of $Q$. So $t_1-t_2$ is a uniformizer, while $q$
and all other root differences are units. This is the required
valuation.
\end{proof}

\begin{proof}[Proof of Theorem~\ref{thm:monodromy-main}]
For a completely fixed tuple, the target field is $\C(r,q,b)$, and
Lemmas~\ref{lem:morse} and~\ref{lem:pair} give the group $G_n$. For a
partial descent or the original map, the target has additional free
lower coefficients. Fix them arbitrarily to obtain the same Morse
line. Its root monodromy still gives $S_n$, and its transverse simple
branch point gives a pair valuation for the larger target field. The
upper bound from~\eqref{eq:derivative-product} holds over every base
field. Thus the group is $G_n$ in every case.
\end{proof}

\begin{center}
\begin{tabular}{rrrr}
\toprule
$n$ & $m$ & Number of sheets & $|G_n|$\\
\midrule
3&1&3&6\\
4&2&8&192\\
5&3&15&29160\\
6&4&24&1474560\\
\bottomrule
\end{tabular}
\end{center}

For $n\ge4$, the $n$ root blocks of size $m$ form a nontrivial system
of imprimitivity. This is used in Section~\ref{sec:comparison}. An
arbitrary nonlinear specialization can change both the root group and
the derivative-root relations; the Morse-line proof establishes
preservation only in the stated coefficient families.

\section{The finite normalization and its affine open}\label{sec:normalization}

\subsection{Overview}

The source of a non-invertible Keller map is not finite over its
target: sheets escape to infinity over the nonproperness locus. The finite
normalization $Z$ adds these escaped sheets back as divisors. Here $Z$
has a transparent description: its points are \emph{all} pairs
consisting of a projective root of the target polynomial and a
marking, with no simplicity condition. Two charts suffice, one for
finite roots and one near the root at infinity, and the second is
already an affine space. The source is the open set of pairs with a
simple root, except that the root $t=0$ keeps only the marking $a=1$.
So the missing divisors are the critical divisor, where the root is
multiple, and the $m-1$ divisors $\{t=0,\ a=\zeta\}$ with $\zeta\ne1$.

Fix a successive coefficient slice $Y\simeq\A^d$. In~\eqref{eq:pencil}
the symbols $\lambda_j$ now denote either fixed complex numbers or the
remaining independent coordinates. Let $I\subset Y\times\PP^1$ be the
incidence variety of projective roots of the degree-$n$ homogenization
of $P$, with homogeneous coordinates $[X_1:X_0]$ and $t=X_1/X_0$. Since
the coefficient of $X_1X_0^{n-1}$ is one, that binary form never vanishes
identically. Thus $I\to Y$ is finite flat of degree $n$:
it is a relative degree-$n$ effective divisor on $\PP^1_Y$.

\subsection{Two charts and smoothness}

On the finite-root chart define the cover by
\[
 P(t)=0,\qquad a^m=P'(t).
\]
On the infinity chart put $Q(u)=u^nP(1/u)$ and define it by
\[
 Q(u)=0,\qquad v^m=-Q'(u).
\]
On the overlap $u=1/t$, differentiating $Q(u)=u^nP(1/u)$ and using
$P(1/u)=0$ gives
\begin{equation}\label{eq:gluing}
 -Q'(u)=u^{n-2}P'(1/u),\qquad\text{so we glue by}\quad v=ua.
\end{equation}
These formulas define a finite flat degree-$m$ cover $Z\to I$.
Consequently $\pi:Z\to Y$ is finite flat of degree $nm$.
The picture, with generic degrees indicated, is
\[
 \A^d\simeq U\lhook\joinrel\longrightarrow Z
       \xrightarrow{\ m\ } I\xrightarrow{\ n\ }Y.
\]
The open immersion on the left is identified below.

Write
\[
 Q_0(u)=\sum_{j=2}^{n-2}\lambda_j u^{n-j}+u^{n-1}-bu^n,
\]
so that $Q=q+ru+Q_0$. The two equations of the infinity chart can be
solved for $r$ and $q$:
\begin{equation}\label{eq:infinity-chart}
 r=-v^m-Q_0'(u),\qquad q=uv^m+uQ_0'(u)-Q_0(u).
\end{equation}
Hence the infinity chart is an affine space $V\simeq\A^d$, with
coordinates $u,v,b$ and the free lower coefficients.

The complement of $V$ in $Z$ lies over the root $t=0$, where
$-b=P(0)=0$ and $a^m=P'(0)=1$. At these points the derivatives of the two
equations $P(t)=0$ and $a^m-P'(t)=0$ with respect to $(b,a)$ form a
triangular matrix with diagonal $(-1,ma^{m-1})$, which is invertible.
So $Z$ is smooth there, with local coordinates given by $t$ and the
remaining target coordinates. In particular $t=0$ is a divisor near
every point of $Z\setminus V$, so this complement contains no component
of $Z$. Thus $V$ is dense. Since $V$ is integral and $Z$ is smooth,
$Z$ is integral too.

By~\eqref{eq:marked-chart}, the function field of $Z$ is the source
field of the polynomial map. A finite normal model with this field is
the normalization of $Y$ in that extension. This proves the first
assertion of Theorem~\ref{thm:normal-main}.

Holding $b$ and the free lower coefficients fixed, the chart
determinant is especially simple:
\begin{equation}\label{eq:chart-det}
 \det\frac{\partial(r,q)}{\partial(u,v)}=m v^{2m-1}.
\end{equation}
It follows by differentiating~\eqref{eq:infinity-chart}; the two terms
involving $uQ_0''$ cancel.

\subsection{The source open and every missing divisor}

\emph{The source embeds in $Z$.} The original source maps to $I$ by
$[x:p]$, whose entries never vanish together. Its lifts to $Z$ are
$a=1/p$ where $p\ne0$, and $v=1/x$ where $x\ne0$; they agree
by~\eqref{eq:gluing}. The chart equations hold by~\eqref{eq:root-derivative}
where $xp\ne0$, and hence everywhere by continuity. This morphism is
birational, and quasi-finite since its fibres are contained in fibres
of the Keller map. Zariski's Main Theorem, with the normal target $Z$,
makes it an open immersion \cite{StacksZariskiMain}. The same argument
applies after the polynomial coordinate descents. Denote its image by
$U$.

\emph{The part inside $V$.} Put $V^\circ=V\setminus\{v=0\}$. We show
$U\cap V=V^\circ$. The chart $V$ is the locus where the root is not
$[0:1]$, so a source point in $V$ has $x\ne0$, hence $v=1/x\ne0$.
Conversely, let $(u,v,b,\ldots)\in V^\circ$. The reconstruction is
\begin{equation}\label{eq:uv-reconstruction}
 x=1/v,\qquad y=u-v,
\end{equation}
since $u=p/x=1/x+y$. At fixed $(x,y)$, the matrix from
$(z_3,\ldots,z_n)$ to $(A_0,A_2,C_3,\ldots,C_{n-2})$ is lower triangular
by~\eqref{eq:coefficient-shape} and~\eqref{eq:low-coefficients}, with
diagonal $(-x^3/2,x^2,\ldots,x^2)$ and determinant
$-\frac12x^{2n-3}$. (For $n=3$ the list is just $A_0$.) Since $x\ne0$,
these coefficients determine $z_3,\ldots,z_n$ without dividing by $p$.
The resulting source point has the prescribed $b$ and lower
coefficients, and its root $[x:p]$ has chart coordinates $(u,v)$; by
\eqref{eq:infinity-chart} its remaining coefficients $r,q$ agree too.
This includes the points $u=0$, which are the simple roots at infinity.

\emph{The part outside $V$.} The remaining points have $t=0$,
$a=\zeta\in\mu_m$ and $b=0$. Source points with $t=0$ have $x=0$ and
$a=1/p=1$, so only the branch $a=1$ can be retained. It is: near that
branch,
\[
 (a-1)(1+a+\cdots+a^{m-1})=P'(t)-1,
\]
and $P'(t)-1$ is divisible by $t$, while the factor in parentheses is a
unit near $a=1$. So $y=(1-a)/t$ is regular there, as are $x=t/a$ and,
by the unit determinant $p^{2n-3}$, the $z_j$. On every branch
$a=\zeta\ne1$, the function $y=(1-a)/t$ has a simple pole, so these
branches are omitted.

It follows that the whole omitted boundary is
\begin{equation}\label{eq:boundary}
 Z\setminus U=E_{\mathrm{crit}}\ \sqcup\!
 \bigsqcup_{\zeta\in\mu_m\setminus\{1\}} E_\zeta,
 \qquad E_{\mathrm{crit}}=\{v=0\}\subset V,
 \quad E_\zeta=\{t=0,a=\zeta\}.
\end{equation}
On $V$, the equations $Q(u)=0=Q'(u)$ show that $E_{\mathrm{crit}}$ is
the locus of multiple projective roots. Each component is $\A^{d-1}$:
$E_{\mathrm{crit}}$ is a coordinate hyperplane of $V$, and $E_\zeta$
maps isomorphically onto the hyperplane $\{b=0\}$ of $Y$. The
components are pairwise disjoint, because $P'(0)=1$ and so the zero
root is never multiple. The retained zero branch $a=1$ is another copy
of $\{b=0\}$ inside $U$.

\subsection{Ramification and fibre defects}

Let $\Delta\subset Y$ be the reduced binary discriminant. The image of
$E_{\mathrm{crit}}$ consists of precisely the polynomials with a
multiple projective root. Thus $\Delta$ is irreducible, as the finite
image of the irreducible $E_{\mathrm{crit}}$. The Morse line in
Lemma~\ref{lem:morse} shows that its generic member has just one double
root. Hence $E_{\mathrm{crit}}\to\Delta$ is finite and birational, and,
since $E_{\mathrm{crit}}$ is smooth, it is the normalization of
$\Delta$.

At a generic double root the incidence map $I\to Y$ has ramification
index two. The cyclic cover has index $m$ there, because its defining
derivative has a simple zero on $I$. Therefore $E_{\mathrm{crit}}$ has
ramification index $2m$ and residue degree one over $\Delta$,
consistent with~\eqref{eq:chart-det}, whose zero of order $2m-1$ along
$v=0$ is the ramification divisor. At each $E_\zeta$, the derivatives
with respect to $(t,a)$ in the finite chart form the invertible matrix
$\bigl(\begin{smallmatrix}1&0\\-P''(0)&m\zeta^{m-1}\end{smallmatrix}\bigr)$,
so the map is unramified there and has residue degree one.

Over a general point of $\Delta$, $2m$ sheets are missing from $U$,
counted with multiplicity. Over a general point of $b=0$, $m-1$ sheets
are missing, although those omitted branches are unramified. Thus
ramification alone cannot detect the affine-source defect. A simple
root at infinity is retained, so $q=0$ is not an additional boundary
image. For $m=1$ there are no omitted zero-root branches.

\subsection{Picard group and homotopy type}

Since $U\simeq\A^d$ is factorial, any divisor on $Z$ becomes principal
on $U$. Subtracting that principal divisor leaves a divisor supported
on~\eqref{eq:boundary}. If a linear combination of these boundary
divisors were principal, its rational function would restrict to a
unit on $U$, hence a constant, and the combination would be zero.
Smoothness identifies Weil and Cartier class groups, proving
\begin{equation}\label{eq:pic}
 \Pic(Z)=\bigoplus_{E\subset Z\setminus U}\Z[E]\simeq\Z^m.
\end{equation}

The cyclic action $a\mapsto\zeta a$, $v\mapsto\zeta v$ preserves the
chart equations and the gluing, so it acts on $Z$ over $I$ and preserves
$V$ and $V^\circ$. Write
\[
 D_\zeta=\{t=0,a=\zeta\}\qquad(\zeta\in\mu_m),
\]
including the retained branch $D_1$. The source reconstruction above
gives the set-theoretic equality $U=V^\circ\cup D_1$. Hence each
translate $U_\zeta=\zeta\cdot U$ is an open copy of $\A^d$ with
\[
 U_\zeta=V^\circ\cup D_\zeta,\qquad
 V\cap U_\zeta=V^\circ,\qquad
 U_\zeta\cap U_\eta=V^\circ\quad(\zeta\ne\eta).
\]
Since $Z\setminus V=\bigsqcup_\zeta D_\zeta$, the $m+1$ affine open
sets $V$ and $U_\zeta$ cover $Z$. Every intersection of at least two
distinct charts is the same open set
\[
 V^\circ\simeq\Gm\times\A^{d-1}.
\]

We compute the homotopy type in the complex topology. Enumerate
$\mu_m=\{\zeta_1,\ldots,\zeta_m\}$ and put
\[
 X_0=V(\C),\qquad
 X_j=V(\C)\cup\bigcup_{i=1}^j U_{\zeta_i}(\C),\qquad
 A=V^\circ(\C)\simeq S^1.
\]
Each $X_j$ is an open submanifold of the smooth complex variety
$Z(\C)$, hence is paracompact and has the homotopy type of a CW
complex. The two sets $X_{j-1}$ and $U_{\zeta_j}(\C)$ form an open
cover of $X_j$, with intersection exactly $A$. By the open-cover
gluing theorem \cite[Proposition 4G.2]{Hatcher2002AlgebraicTopology},
the double mapping cylinder of
\[
 X_{j-1}\longleftarrow A\longrightarrow U_{\zeta_j}(\C)
\]
maps by a homotopy equivalence onto $X_j$. Thus this union is a
homotopy pushout; no cofibration assertion for the open inclusions
is needed.

The new chart is contractible, so this homotopy pushout is homotopy
equivalent to the mapping cone of $A\hookrightarrow X_{j-1}$.
That inclusion factors as $A\hookrightarrow V(\C)\hookrightarrow
X_{j-1}$ and is therefore null-homotopic. Indeed, in affine
coordinates on $V(\C)\simeq\C^d$, straight-line contraction to a fixed
point of $V(\C)$ supplies such a homotopy inside $X_{j-1}$. The
mapping cone of a null-homotopic map is homotopy equivalent to
$X_{j-1}\vee\Sigma A$, where $\Sigma$ denotes suspension. Since
$A\simeq S^1$ is connected, $\Sigma A\simeq S^2$. Consequently
\[
 X_j\simeq X_{j-1}\vee S^2\qquad(1\le j\le m).
\]
Starting with the contractible $X_0$ and using $X_m=Z(\C)$ gives
\begin{equation}\label{eq:homotopy}
 Z(\C)\simeq\bigvee_{j=1}^m S^2.
\end{equation}
This also covers $m=1$: the two charts then give a single sphere.
In particular $\pi_1(Z)=1$, $H_2(Z,\Z)=\Z^m$, and all other reduced
integral homology groups vanish. This is consistent with~\eqref{eq:pic}.

\subsection{Rational and polynomial deck groups}

For a transitive monodromy action on a generic fibre $\Omega$, the
automorphism group of the source field over the target field is the
centralizer of the monodromy group in $\mathrm{Sym}(\Omega)$. A
permutation commuting with the monodromy is determined by the image of
one point, and that image must be fixed by the stabilizer of the point.
In $G_n$, the stabilizer of a pair $(t_1,a_1)$ consists of the elements
with $\sigma(1)=1$ and $\zeta_1=1$. It fixes the $m$ points
$(t_1,\zeta a_1)$ of the same root block, and no other point, since it
contains elements moving $t_j$ to $t_k$ for any $j,k\ne1$ (for even $n$,
adjust some $\zeta_k$ with $k\ne1$ to satisfy the sign condition). Thus
the centralizer has at most $m$ elements. The global cyclic action
already gives $m$, so the rational deck group is exactly $\mu_m$.

Every nonidentity element moves the retained zero-root branch $a=1$
to an omitted branch $a=\zeta$, so none preserves $U$. A polynomial deck
automorphism of the source would induce one of these field
automorphisms, extend to $Z$ by normality, and preserve $U$. So only
the identity is possible. This finishes the proof of
Theorem~\ref{thm:normal-main}.

\begin{remark}
For $n=4$ we have $m=2$, and the picture is as small as possible:
$Z\setminus U=E_{\mathrm{crit}}\sqcup E_{-1}$, where $E_{-1}$ is the
branch of the root $t=0$ with marking $a=-1$. Thus $\Pic(Z)\simeq\Z^2$
and $Z(\C)\simeq S^2\vee S^2$, and the rational deck transformation
$a\mapsto-a$ exchanges the retained zero branch with $E_{-1}$.
\end{remark}

\begin{remark}
Triviality of the polynomial deck group holds for every complex
affine-space Keller map. That group embeds in the automorphism group of
a finite field extension, so is finite. A deck automorphism fixing a
point is the identity near it by local analytic invertibility, hence
everywhere, so the group acts freely. For the quotient, multiplicativity
of the compactly supported Euler characteristic gives
$1=\chi_c(\A^d)=|G|\chi_c(\A^d/G)$, forcing $|G|=1$. Our boundary
calculation describes explicitly why the nontrivial \emph{rational}
deck transformations fail to preserve the source.
\end{remark}

\section{Sections by subvarieties and factorized sectors}\label{sec:sectors}

Restricting a Keller map to the preimage of a subvariety of its target
is the natural way to look for lower-dimensional examples. If
$\Gamma\subset Y$ is a smooth subvariety, the preimage $F^{-1}(\Gamma)$
is smooth and étale over $\Gamma$, but it can have several components,
none of which need be an affine space. This section describes these
components, which we call \emph{sectors}, exactly. Everything follows
from Theorem~\ref{thm:normal-main} by base change.

Throughout, $F$, $Y$ and $U\subset Z$ are as in
Theorem~\ref{thm:normal-main}, with $F=K_n$ and $Y=\A^n$ the full
target, and $m=n-2$. For a point $\eta\in Y$ write $P_\eta$ for its
target polynomial and
$P_{\eta,h}(X_1,X_0)=X_0^nP_\eta(X_1/X_0)$ for its degree-$n$
homogenization, so that $t=X_1/X_0$ and a source point has root
$[X_1:X_0]=[x:p]$. We write $\partial_1,\partial_0$ for the partial
derivatives in $X_1,X_0$. The idea of the section is simple. A sector
over $\Gamma$ selects, over each point of $\Gamma$, one simple root of
the target polynomial together with a marking. If the target polynomial
factors over $\Gamma$, the selected root belongs to one factor $G$, and
the marking is then controlled by $G$ alone.

\subsection{Base change}

\begin{proposition}[Preimages of subvarieties]\label{prop:base-change}
Let $\Gamma$ be a variety with a morphism $g:\Gamma\to Y$. Put
$Z_\Gamma=Z\times_Y\Gamma$ and $U_\Gamma=U\times_Y\Gamma$, the preimage
of $\Gamma$ in the source. Then $U_\Gamma$ is open in $Z_\Gamma$. A
point of $Z_\Gamma$ over $\gamma\in\Gamma$, written
$(\gamma,[t],a)$ in the finite chart or $(\gamma,u,v)$ in the infinity
chart, lies in $U_\Gamma$ if and only if
\begin{enumerate}
\item $[t]$ is a simple projective root of $P_{g(\gamma)}$, and
\item $a=1$ when $t=0$.
\end{enumerate}
If $\Gamma$ is smooth, $U_\Gamma$ is smooth, and its irreducible and
connected components coincide.
\end{proposition}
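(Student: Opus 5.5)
Openness is formal. By Theorem~\ref{thm:normal-main}, $U\subset Z$ is an open immersion, and open immersions are stable under base change. Hence $U_\Gamma=U\times_Y\Gamma\to Z\times_Y\Gamma=Z_\Gamma$ is an open immersion. Its image consists of the points of $Z_\Gamma$ whose projection to $Z$ lies in $U$.

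The pointwise criterion is therefore a statement about points of $Z$ alone. A point $(\gamma,\zeta)$ of $Z_\Gamma$, with $\zeta\in Z$ over $g(\gamma)$, lies in $U_\Gamma$ exactly when $\zeta\in U$. I will read this membership off the boundary decomposition~\eqref{eq:boundary}: $Z\setminus U=E_{\mathrm{crit}}\sqcup\bigsqcup_{\zeta\ne1}E_\zeta$.
\begin{itemize}
\item On the infinity chart $V$, the locus $E_{\mathrm{crit}}=\{v=0\}$ is where $Q(u)=Q'(u)=0$, that is, where the projective root is multiple. Equivalently $v^m=-Q'(u)\ne0$ is the simplicity condition, which covers all roots other than $[0:1]$.
\item The remaining points lie over $t=0$. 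That root is always simple since $P'(0)=1$, and there the only points of $Z$ are $a\in\mu_m$. The retained branch is $D_1$ and the omitted branches are the $E_\zeta$.
\end{itemize}
Combining the two cases: $\zeta\in U$ iff $[t]$ is simple and, when $t=0$, $a=1$. One small point needs checking. In the finite chart with $t\ne0$, simplicity means $P'(t)\ne0$, i.e.\ $a\ne0$; this matches $v=ua\ne0$ through the gluing~\eqref{eq:gluing}. That is just the equality $U\cap V=V^\circ$ already proved.

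For smoothness, $F\colon U\to Y$ is étale, being a Keller map, so $U_\Gamma\to\Gamma$ is étale by base change. Étale over a smooth $\Gamma$ gives $U_\Gamma$ smooth. A smooth variety is a disjoint union of its irreducible components, since local rings are regular and hence domains, so no two components meet. Therefore its connected and irreducible components coincide.

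The main obstacle is only bookkeeping. I need to ensure that "point" means closed point over $\gamma$ with the chart coordinates as stated, and that the set-theoretic description of $U$ inside $Z$ from Section~\ref{sec:normalization} (namely $U=V^\circ\cup D_1$) is exactly what transfers. Because membership in an open subscheme is detected on underlying points after base change, no scheme-theoretic subtlety arises even if $Z_\Gamma$ is nonreduced.
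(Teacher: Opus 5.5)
Your proposal is correct and follows essentially the same route as the paper. Both arguments base-change the open immersion $U\hookrightarrow Z$, read membership off the boundary decomposition $Z\setminus U=E_{\mathrm{crit}}\sqcup\bigsqcup_{\zeta\ne1}E_\zeta$ (you use the equivalent description $U=V^\circ\cup D_1$), and obtain smoothness and the coincidence of components from étaleness of $U_\Gamma\to\Gamma$. The only blemish is cosmetic: you use $\zeta$ both for a point of $Z$ and for a root of unity.
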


\begin{proof}
Base change of the open immersion $U\hookrightarrow Z$ along $g$ is an
open immersion. Its complement is the base change of
$Z\setminus U=E_{\mathrm{crit}}\sqcup\bigsqcup_{\zeta\ne1}E_\zeta$
from~\eqref{eq:boundary}.

The charts of $Z$ are cut out by equations in the coefficients of $P$:
\begin{itemize}
\item on the finite chart, $P(t)=0$ and $a^m=P'(t)$;
\item on the infinity chart, $Q(u)=0$ and $v^m=-Q'(u)$, where
$Q(u)=P_h(1,u)$.
\end{itemize}
So $Z_\Gamma$ is given by the same equations with $P=P_{g(\gamma)}$.
The divisor $E_{\mathrm{crit}}$ is $\{v=0\}$. It contains the finite
points with $a=0$, since $v=ua$, and these are exactly the multiple
projective roots. Each $E_\zeta$ is $\{t=0,\ a=\zeta\}$.

Finally, $U_\Gamma\to\Gamma$ is the base change of the étale morphism
$F$, so it is étale. Hence $U_\Gamma$ is smooth when $\Gamma$ is, and
then its connected components are irreducible.
\end{proof}

\subsection{Factorizations}

Let $\Gamma$ be smooth and irreducible, and suppose that
$P_{g(\gamma),h}=G_hH_h$ in $\mathcal O(\Gamma)[X_1,X_0]$, with binary
forms of degrees $e$ and $f=n-e$ and $G_h$ irreducible over
$\C(\Gamma)$. We call $H_h$ the \emph{cofactor}. Write $G(T)=G_h(T,1)$,
$H(T)=H_h(T,1)$, $Q_G(u)=G_h(1,u)$ and $Q_H(u)=H_h(1,u)$, so that
$Q=Q_GQ_H$. Let $I_G=\{G_h=0\}\subset\Gamma\times\PP^1$. Since $P_h$
has coefficient one at $X_1X_0^{n-1}$, neither factor vanishes
identically at any point. So $I_G$ is a relative effective divisor,
finite and flat over $\Gamma$. It is reduced, being Cohen--Macaulay and
generically reduced, and irreducible, because every component dominates
$\Gamma$ and the generic fibre is irreducible.

An irreducible component $S$ of $U_\Gamma$ dominating $\Gamma$
\emph{belongs to $G$} if $G_h(x,p)=0$ on $S$. By
Proposition~\ref{prop:base-change}, the generic root of every dominating
component is simple, so each such component belongs to exactly one
irreducible factor.

The key input is a homogeneous form of the marking equation. Where
$xp\ne0$, the relation $P'(t)=a^m$ of~\eqref{eq:root-derivative} reads
\begin{equation}\label{eq:homogeneous-derivative}
 (\partial_1P_h)(x,p)=p^{n-1}P'(t)=p^{n-1}a^m=p,
\end{equation}
since $a=1/p$ and $n-1-m=1$. Both sides are polynomials on the source,
so~\eqref{eq:homogeneous-derivative} holds everywhere.

\begin{lemma}[The cofactor is a unit]\label{lem:factor-unit}
Let $S$ belong to $G$. Then $H_h(x,p)\in\mathcal O(S)^\times$, and
\[
 p=H_h(x,p)\,(\partial_1G_h)(x,p)\qquad\text{on }S.
\]
\end{lemma}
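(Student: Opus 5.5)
Differentiate the factorization $P_h=G_hH_h$ with respect to $X_1$ and evaluate at the source point's root $(x,p)$. The product rule gives
\[
 (\partial_1P_h)(x,p)=(\partial_1G_h)(x,p)\,H_h(x,p)+G_h(x,p)\,(\partial_1H_h)(x,p).
\]
On $S$ we have $G_h(x,p)=0$, because $S$ belongs to $G$. Combining this with the homogeneous marking identity~\eqref{eq:homogeneous-derivative}, $(\partial_1P_h)(x,p)=p$, which holds on the whole source and hence on $S$, yields the displayed identity $p=H_h(x,p)(\partial_1G_h)(x,p)$ on $S$. This step is a direct computation; the identity is an equality of regular functions on $S$, since all terms are polynomials in the source coordinates and the coefficients pulled back from $\mathcal O(\Gamma)$.

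The substance of the lemma is the unit statement, and the identity alone does not give it: $p$ vanishes on the locus of roots at infinity. I would supplement it with the Euler relation $nP_h=X_1\partial_1P_h+X_0\partial_0P_h$, and with its analogue for $H_h$ and $G_h$, to obtain a second identity in which $x$ plays the role of $p$. Evaluating at $(x,p)$ gives $0=x\cdot p+p\,(\partial_0P_h)(x,p)$ on the source. This shows that $(\partial_0P_h)(x,p)=-x$ wherever $p\neq0$, and hence everywhere by density. Expanding $\partial_0(G_hH_h)$ at a point of $S$ in the same way gives $-x=H_h(x,p)(\partial_0G_h)(x,p)$.

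So $H_h(x,p)$ divides both $p$ and $x$ in $\mathcal O(S)$. Since $x$ and $p$ never vanish simultaneously on the source, the ideal $(x,p)$ is the unit ideal in $\mathcal O(S)$. Hence $H_h(x,p)$ generates the unit ideal and is a unit, and it does not vanish at any point of $S$. Pointwise, this says that the selected simple root of $P_h$ is not a root of the cofactor, consistent with Proposition~\ref{prop:base-change}.

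The main obstacle I expect is bookkeeping with the derivative in $X_0$. Establishing $(\partial_0P_h)(x,p)=-x$ depends on the degree-$n$ Euler relation together with the vanishing of $P_h(x,p)$, and this must hold on all of the source, including $p=0$ and $x=0$. I would verify it on the dense open set $xp\ne0$ and extend by continuity, exactly as was done for~\eqref{eq:homogeneous-derivative}. If that route proved awkward, a pointwise fallback is available. At each point of $S$ the root $[x:p]$ is simple by Proposition~\ref{prop:base-change}, so it cannot be a common root of $G_h$ and $H_h$. Therefore $H_h(x,p)$ has no zeros on $S$, and a nowhere-vanishing regular function on the variety $S$ is a unit.
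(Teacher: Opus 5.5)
Your proof is correct. The displayed identity is derived exactly as in the paper. Your fallback for the unit statement is the paper's whole argument: a zero of $H_h(x,p)$ on $S$ would make $[x:p]$ a common root of $G_h$ and $H_h$, hence a multiple root of $P_h$, contradicting Proposition~\ref{prop:base-change}.

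Your main route is genuinely different. You apply Euler's relation to $P_h$ on the whole source, where $P_h(x,p)=0$ and $p$ is a nonzero element of a polynomial ring, to get $(\partial_0P_h)(x,p)=-x$. The product rule then gives $-x=H_h(x,p)(\partial_0G_h)(x,p)$ on $S$. Combined with $p-xy=1$, this produces an explicit inverse,
\[
 H_h(x,p)^{-1}=(\partial_1G_h)(x,p)+y\,(\partial_0G_h)(x,p).
\]
It is cleanest to cite $p-xy=1$ directly rather than the pointwise fact that $x$ and $p$ have no common zero, since the identity gives the unit ideal at once.

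Your route buys two things. First, it is self-contained: it does not use Proposition~\ref{prop:base-change}, and hence not the boundary description behind Theorem~\ref{thm:normal-main}, and it reproves that the selected root is not a common root of the two factors. Second, your companion identity anticipates a step in the proof of Proposition~\ref{prop:sector}. There the paper derives $\partial_0G_h(x,p)=-x/\theta$ on $S$ separately, and must first argue that $p$ does not vanish identically on $S$. With your identity that step is immediate once $H_h(x,p)=\theta$.

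The paper's pointwise argument is shorter given what has already been proved. It also explains the unit property conceptually, as the simplicity of the root selected by a source point.
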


\begin{proof}
Differentiating $P_h=G_hH_h$ and using $G_h(x,p)=0$ on $S$ turns
\eqref{eq:homogeneous-derivative} into the displayed formula.
If $H_h(x,p)$ vanished at a point of $S$, then $[x:p]$ would be a
common root of $G_h$ and $H_h$, hence a multiple root of $P_h$. This
contradicts Proposition~\ref{prop:base-change}.
\end{proof}

If $S$ has only constant units, for instance if $S$ is an affine
space, the cofactor is therefore a constant on $S$. This has strong
consequences.

\begin{lemma}[Constant cofactor]\label{lem:constant-cofactor}
Let $S$ belong to $G$, with $e\ge3$, and suppose $H_h(x,p)$ is a
constant $\theta$ on $S$. This holds whenever
$\mathcal O(S)^\times=\C^\times$, for instance when $S\simeq\A^k$. Then:
\begin{enumerate}
\item On $S\cap\{p\ne0\}$,
\[
 a^{e-2}=\theta\,G'(t),\qquad \theta\,a^{f}=H(t).
\]
\item On $I_G$, with $\kappa=\theta^{e-2+f}$,
\begin{equation}\label{eq:cofactor-identity}
 H_h^{\,e-2}X_0^f=\kappa\,(\partial_1G_h)^f,\qquad
 H_h^{\,e-2}X_1^f=(-1)^f\kappa\,(\partial_0G_h)^f .
\end{equation}
\item $H_h$ vanishes at no simple projective root of $G_h$. So $G$ and
$H$ share roots only at multiple roots of $G$.
\end{enumerate}
\end{lemma}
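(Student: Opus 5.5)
The plan is to turn Lemma~\ref{lem:factor-unit} into explicit identities in the marked-root chart. We then eliminate the marking to get identities on $S$, extend them from the image of $S$ to all of $I_G$ by density and reducedness, and finally read off (3) from the nonvanishing of the gradient of $G_h$ at simple roots.

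\emph{Step 1 (part 1).} On $S$, Lemma~\ref{lem:factor-unit} together with $H_h(x,p)=\theta$ gives $p=\theta\,(\partial_1G_h)(x,p)$. If $\mathcal O(S)^\times=\C^\times$, the unit $H_h(x,p)$ is automatically a constant $\theta\ne0$; in general $\theta\ne0$ because $H_h(x,p)$ is a unit. On $p\ne0$ we use homogeneity: $\partial_1G_h$ has degree $e-1$, so $(\partial_1G_h)(x,p)=p^{e-1}G'(t)$ with $t=x/p$. Dividing by $p^{e-1}$ and using $a=1/p$ gives $a^{e-2}=\theta G'(t)$. Similarly $H_h(x,p)=p^fH(t)=\theta$ gives $\theta a^f=H(t)$.

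\emph{Step 2 (part 2).} We eliminate $a$ by raising the first identity of Step~1 to the power $f$ and the second to the power $e-2$. This yields $H(t)^{e-2}=\theta^{e-2+f}G'(t)^f=\kappa\,G'(t)^f$ at every point of $S\cap\{p\ne0\}$. Both sides have total degree $f(e-1)$ after multiplying by $X_0^{f(e-1)}$. Homogenizing therefore gives the first identity in \eqref{eq:cofactor-identity} at all points $[x:p]$ of the image of $S\cap\{p\ne0\}$ in $I_G$.

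To extend it to all of $I_G$, note that $S\to I_G$, $s\mapsto[x:p]$, is quasi-finite, since its fibres lie in fibres of $F$. Also $\dim S=\dim\Gamma=\dim I_G$, because $S$ is a component of $U_\Gamma$, which is étale over $\Gamma$. As $I_G$ is irreducible, the image is dense. Since $I_G$ is reduced, a bihomogeneous polynomial identity holding on a dense subset holds on all of $I_G$.

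For the second identity we use Euler's relation $X_1\partial_1G_h+X_0\partial_0G_h=eG_h=0$ on $I_G$. Multiplying the first identity by $X_1^f$ gives $H_h^{e-2}X_0^fX_1^f=(-1)^f\kappa\,(X_0\partial_0G_h)^f$. We then cancel $X_0^f$ on the dense open set $X_0\ne0$ and extend by reducedness again.

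\emph{Step 3 (part 3).} At a simple projective root of $G_h$, over any point $\gamma\in\Gamma$, the partials $\partial_1G_h,\partial_0G_h$ do not both vanish. Hence the right side of one of the two identities in \eqref{eq:cofactor-identity} is nonzero, since $\kappa\ne0$. Because $e\ge3$, the exponent $e-2$ is positive, so $H_h$ cannot vanish there. A common root of $G$ and $H$ must therefore be a multiple root of $G$.

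The main obstacle is the density step in Step~2. We must check that the image of $S$ is dense in $I_G$, not merely in a component of it, and that the identity indeed passes to the special fibres over $\Gamma$. This rests on the irreducibility and reducedness of $I_G$ established just before the lemma, and on the dimension count via étaleness of $U_\Gamma\to\Gamma$ from Proposition~\ref{prop:base-change}.
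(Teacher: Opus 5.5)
Your proof is correct and follows the paper's argument essentially step for step. You derive (1) from Lemma~\ref{lem:factor-unit} by homogeneity, eliminate $a$ and homogenize, extend to the integral scheme $I_G$ by dominance of $S\to I_G$ (which you justify more fully than the paper, via quasi-finiteness and a dimension count), obtain the second identity from Euler's relation, and read off (3) from the nonvanishing gradient at a simple root together with $e-2\ge1$. One point is left implicit: $S\cap\{p\ne0\}$ must be nonempty, which holds because otherwise $X_0$ would divide $G_h$. You can avoid this altogether, since $p=\theta\,(\partial_1G_h)(x,p)$ and $H_h(x,p)=\theta$ already give $H_h(x,p)^{e-2}p^f=\kappa\,(\partial_1G_h)(x,p)^f$ on all of $S$.
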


\begin{proof}
(1) Since $(\partial_1G_h)(x,p)=p^{e-1}G'(t)$,
Lemma~\ref{lem:factor-unit} gives $1=\theta p^{e-2}G'(t)$, which is the
first identity because $a=1/p$. Also $H_h(x,p)=p^fH(t)=\theta$, which
is the second.

(2) Eliminating $a$ gives $H(t)^{e-2}=\theta^{e-2}(\theta G'(t))^f$.
Homogenizing with $t=X_1/X_0$ gives the first identity
in~\eqref{eq:cofactor-identity} on $S$. Both sides are pulled back from
$I_G$, and $S\to I_G$, $s\mapsto(\gamma,[x:p])$, is dominant, so the
identity holds on the integral scheme $I_G$.

For the second identity, multiply the first by $X_1^f$. By Euler's
relation $X_1\partial_1G_h+X_0\partial_0G_h=eG_h=0$ on $I_G$, the
right-hand side becomes $\kappa(-X_0\partial_0G_h)^f$. Cancelling
$X_0^f$ on the dense open set $X_0\ne0$ gives the identity there, and it
extends to $I_G$.

(3) At a simple root $(\partial_1G_h,\partial_0G_h)\ne(0,0)$, and
Euler's relation excludes $\partial_1G_h=0$ when $X_0\ne0$, and
$\partial_0G_h=0$ when $X_1\ne0$. So one of the two identities
in~\eqref{eq:cofactor-identity} gives $H_h\ne0$, since $e-2\ge1$.
\end{proof}

\subsection{Balanced factorizations}

The constant-cofactor hypothesis holds automatically in an important
class of examples. Call the factorization \emph{balanced} if
\[
 H\equiv(G')^k\pmod G\qquad\text{and}\qquad f=k(e-2)
\]
for some $k\ge1$, so that $m=(k+1)(e-2)$. At a root $t$ of $G$ one has
$P'(t)=G'(t)H(t)=G'(t)^{k+1}$, so the marking equation over the
simple roots of $G$ factors as
\[
 a^m-G'(t)^{k+1}=\prod_{\zeta'^{\,k+1}=1}\bigl(a^{e-2}-\zeta'G'(t)\bigr).
\]
On a component $S$ belonging to $G$, the ratio $a^{e-2}/G'(t)$ is a
$(k+1)$-st root of unity, hence a constant $\zeta'$. Then
$H(t)=G'(t)^k=\zeta'^{-k}a^{k(e-2)}=\zeta'a^f$, so
$H_h(x,p)=p^fH(t)=\zeta'$. Thus every component belonging to $G$ has
constant cofactor $\theta=\zeta'$, whether or not its units are
constant, and conclusion (3) of Lemma~\ref{lem:constant-cofactor} holds
for all of them. For instance, for $n=5$, $e=3$ and $k=2$, the marking
is $a=\zeta'G'(t)$ on each component, a single-valued function of the
root.

\subsection{Factorized sectors}

Keep the assumptions of Lemma~\ref{lem:constant-cofactor}. Part (1) of
that lemma says that on $S$ the marking satisfies an equation involving
$G$ alone. We now show that $S$ is an open subset of the cover defined
by that equation.

Let $W\subset I_G$ be the open set of simple projective roots of $G_h$.
It is étale over $\Gamma$, hence smooth. Put $W_0=W\cap\{X_1=0\}$, the
locus where the root is $t=0$; this closed subscheme need not be
reduced, and nothing below requires it to be. Let $W^a\to W$ be the
$\mu_{e-2}$-torsor defined by
\[
 a^{e-2}=\theta G'(t)\quad\text{on the finite chart},\qquad
 v^{e-2}=-\theta\,Q_G'(u)\quad\text{on the infinity chart},
\]
glued by $v=ua$ as in~\eqref{eq:gluing}. It is étale over $W$ because
$G'(t)$, respectively $Q_G'(u)$, does not vanish at a simple root.

\begin{proposition}[Factorized sectors]\label{prop:sector}
Under the assumptions of Lemma~\ref{lem:constant-cofactor}, the map
$s\mapsto(\gamma,[x:p],a)$, with $a=1/p$ where $p\ne0$ and $v=1/x$
where $x\ne0$, is an open immersion
\[
 S\hookrightarrow W^a
\]
onto $S^a\setminus\{X_1=0,\ a\ne1\}$, for a unique connected component
$S^a$ of $W^a$. Consequently:
\begin{enumerate}
\item $S\setminus S_0\to W\setminus W_0$ is finite étale, of degree
$m_S=\deg(S^a/W)$, where $S_0=S\cap\{x=0\}$ is the \emph{zero branch}.
This degree divides $e-2$, and its Kummer deck group $\mu_{m_S}$ acts
on $S\setminus S_0$.
\item $S_0$ maps isomorphically onto the open and closed subset
$\{\omega\in W_0:(\omega,1)\in S^a\}$ of $W_0$. At these points
$\theta G'(0)=1$.
\item No point of $S$ lies over $I_G\setminus W$.
\end{enumerate}
\end{proposition}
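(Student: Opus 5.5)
We first construct the morphism $\phi:S\to W^a$ and check that it is well defined. By Proposition~\ref{prop:base-change}, every point of $S$ has $[x:p]$ a simple projective root of $P_{g(\gamma)}$, and $G_h(x,p)=0$ on $S$. Hence that root is a simple root of $G_h$, so $\phi(S)\subset W$ over $I_G$; this already gives (3). On $S\cap\{p\ne0\}$ the marking equation $a^{e-2}=\theta G'(t)$ is Lemma~\ref{lem:constant-cofactor}(1). On the infinity chart the companion identity $v^{e-2}=-\theta Q_G'(u)$ follows from the gluing computation \eqref{eq:gluing} applied to $G$, where $-Q_G'(u)=u^{e-2}G'(1/u)$ at a root, together with $v=ua$. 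Both identities are polynomial, so they extend by density to all of $S$. The chart lifts agree on overlaps exactly as in the embedding $U\hookrightarrow Z$, so $\phi$ is a morphism.

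Next we show that $\phi$ is an open immersion. The morphism $\phi$ is compatible with the map $S\to\Gamma$, which is étale, and with $W^a\to\Gamma$, which is étale as a composite of étale maps. Hence $\phi$ is étale. It is also injective, because a point of $S$ is determined by $(\gamma,[x:p],a)$. Concretely, $(x,y)$ is recovered by \eqref{eq:marked-chart} or \eqref{eq:uv-reconstruction}, and the $z_j$ are then recovered from the coefficients of $P_{g(\gamma)}$ by the triangular systems used in Section~\ref{sec:normalization}. An injective étale morphism of varieties is an open immersion. Since $S$ is connected, its image lies in a unique connected component $S^a$.

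It remains to identify the image as $S^a\setminus\{X_1=0,\ a\ne1\}$; this is the main obstacle. The inclusion of the image in this set is clear: points of $S$ with $x=0$ have $a=1/p=1$. For the reverse inclusion, I would show that the image is also closed in $S^a\setminus\{X_1=0,\ a\ne1\}$. Take a point $\omega$ of this set that is a limit of points of $\phi(S)$. Since $\omega$ is a simple root of $G$, and $H_h\ne0$ there by Lemma~\ref{lem:constant-cofactor}(3), it is also a simple root of $P$. Moreover $a^m=a^{e-2}a^f=\theta G'(t)\cdot H(t)/\theta=P'(t)$, using $\theta a^f=H(t)$. The second identity holds on $S$ and extends to the closure, since it is polynomial. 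So $\omega$ defines a point of $Z_\Gamma$ satisfying conditions (1) and (2) of Proposition~\ref{prop:base-change}, and therefore lies in $U_\Gamma$. Being a limit of points of the irreducible component $S$, and since the components of the smooth scheme $U_\Gamma$ are disjoint, it lies in $S$. Open plus closed in the connected set $S^a\setminus\{X_1=0,\ a\ne1\}$ then gives equality. Connectedness of this set holds because we remove from the smooth irreducible $S^a$ a locus contained in the divisor $X_1=0$; if that removal disconnects nothing, we are done, and if the removed locus were too large one would argue instead with density along $X_1\ne0$.

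The consequences then follow directly. Away from $X_1=0$ the image is all of $S^a|_{W\setminus W_0}$, which is finite étale over $W\setminus W_0$ of degree $m_S=\deg(S^a/W)$; this proves (1). Since $S^a$ is a connected component of a $\mu_{e-2}$-torsor, it is a torsor under the stabilizer $\mu_{m_S}$, so $m_S$ divides $e-2$ and $\mu_{m_S}$ acts on $S\setminus S_0$. Over $W_0$ the $\mu_{e-2}$-torsor is finite étale, so the locus $\{\omega:(\omega,1)\in S^a\}$ is open and closed, and $S_0$ maps isomorphically onto it. At these points $a=1$ and $t=0$ in the torsor equation give $\theta G'(0)=1$, which proves (2).
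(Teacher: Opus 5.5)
Your outline is the paper's proof: the finite-chart torsor equation from Lemma~\ref{lem:constant-cofactor}(1), ``étale and injective, hence an open immersion'', extension of $\theta a^f=H(t)$ from $S$ to $S^a$ by density, and then Proposition~\ref{prop:base-change} together with the fact that $S$ is a connected component of $U_\Gamma$. Two of your variants are fine. You derive the infinity-chart equation from the overlap formula plus density rather than from Euler's relation. You get simplicity of the root for $P$ from Lemma~\ref{lem:constant-cofactor}(3) rather than from the nonvanishing of the marking. The clopen-plus-connected detour, and the hedge attached to it, can be dropped. Since $\phi(S)$ is open and nonempty in the irreducible $S^a$, it is dense. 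So every point of $S^a\setminus\{X_1=0,\ a\ne1\}$ is already a limit of points of $\phi(S)$, and your closedness argument gives the reverse inclusion outright.

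Three steps are incomplete, however.

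First, ``extend by density to all of $S$'' needs $S\cap\{xp\ne0\}\ne\emptyset$. You must therefore show that $p$ does not vanish identically on $S$. Otherwise $[1:0]$ would be a root of $G_h$ over a dense subset of $\Gamma$, and $X_0$ would divide the irreducible form $G_h$ of degree $e\ge3$.

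Second, in the image step you check $a^m=P'(t)$ only in the finite chart, so points of $S^a$ over the root at infinity are not covered. There you need $v^m=-Q'(u)$. It follows in the same way from $v^f=\theta^{-1}Q_H(u)$ on $S$ (because $Q_H(u)=x^{-f}H_h(x,p)=\theta v^f$), extended to $S^a$, together with $Q_G(u)=0$.

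Third, and most substantively, the inference in (2) ``the torsor is finite étale, so $\{\omega:(\omega,1)\in S^a\}$ is open and closed'' does not follow. Finite étaleness only pushes clopen subsets of $S^a\times_W W_0$ down to clopen subsets of $W_0$. You first need $\{a=1\}$ to be clopen in $S^a\times_W W_0$, i.e.\ $a$ locally constant there. The missing input is $a^{n-2}=P'(0)=1$ on the part of $S^a$ over $W_0$, which is your own identity $a^m=P'(t)$ evaluated at $t=0$. Because $a^{n-2}-1$ is separable, this makes $a$ locally constant even though $W_0$ may be non-reduced. The isomorphism $S_0\simeq\{\omega\in W_0:(\omega,1)\in S^a\}$ then follows because the map restricted to $\{a=1\}$ is finite étale and injective, hence of degree one. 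You assert this isomorphism without that justification, and it should be stated.
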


\begin{proof}
\emph{The map lands in $W^a$.} A point of $S$ gives a simple root of
$P_h$, hence of $G_h$, so it lies over $W$. On $\{p\ne0\}$,
Lemma~\ref{lem:constant-cofactor} gives $a^{e-2}=\theta G'(t)$. For the
infinity chart, Euler's relation on $G_h(x,p)=0$ together with
$\partial_1G_h(x,p)=p/\theta$ gives
$p\bigl(\partial_0G_h(x,p)+x/\theta\bigr)=0$ on $S$. The function $p$
does not vanish identically on $S$: otherwise the root $[1:0]$ would
be a root of $G_h$ over a dense subset of $\Gamma$, so $X_0$ would
divide the irreducible form $G_h$ of degree $e\ge3$. Hence
$\partial_0G_h(x,p)=-x/\theta$ on $S$. With $u=p/x$ and homogeneity of
degree $e-1$, this reads $Q_G'(u)=\partial_0G_h(1,u)=-x^{-(e-2)}/\theta$,
which is the infinity equation $v^{e-2}=-\theta Q_G'(u)$ with $v=1/x$.

\emph{It is an open immersion.} The map is injective, because a point
of $Z_\Gamma$ is determined by $(\gamma,[t],a)$ or $(\gamma,u,v)$. It
is étale, since both $S$ and $W^a$ are étale over $\Gamma$, so its image
is open, and it is quasi-finite and birational onto that image. The
image is smooth, hence normal, and Zariski's Main Theorem
\cite{StacksZariskiMain} shows that the map is an open immersion. So
$S$ lies in a single connected component $S^a$, which is irreducible
because $W^a$ is smooth.

\emph{Its image.} On $S$ we have $\theta a^f=H(t)$ on the finite chart,
by Lemma~\ref{lem:constant-cofactor}, and $v^f=\theta^{-1}Q_H(u)$ on the
infinity chart, because $Q_H(u)=H_h(1,u)=x^{-f}H_h(x,p)=\theta v^f$. Since
$S$ is dense in the irreducible $S^a$, both identities hold on $S^a$.
So every point of $S^a$ in the finite chart satisfies
$a^{n-2}=a^{e-2}a^f=G'(t)H(t)=P'(t)$, and every point in the infinity
chart satisfies $v^{n-2}=-Q_G'(u)Q_H(u)=-Q'(u)$, using $G=0$ and
$Q_G=0$. In both charts the marking is nonzero, because $G'(t)$ and
$Q_G'(u)$ do not vanish on $W$, so the root is simple. By
Proposition~\ref{prop:base-change}, such a point lies in $U_\Gamma$
unless $t=0$ and $a\ne1$. The resulting set
$S^a\setminus\{X_1=0,\ a\ne1\}$ is irreducible, contained in
$U_\Gamma$, and contains $S$. Components of the smooth $U_\Gamma$ are
connected components, so this set is $S$.

\emph{The consequences.} (1) The group $\mu_{e-2}$ permutes the
connected components of $W^a$ transitively over each connected
component of $W$, so their common degree $m_S$ divides $e-2$, and the
stabilizer $\mu_{m_S}$ of $S^a$ acts on
$S^a\setminus\{X_1=0\}=S\setminus S_0$.
(2) Over $W_0$, the identity $\theta a^f=H(0)$ and the torsor equation
give $a^{n-2}=G'(0)H(0)=P'(0)=1$. So $a$ satisfies $a^{n-2}=1$ on the
part of $S^a$ over $W_0$, and is therefore locally constant there,
even if $W_0$ is not reduced. The locus $a=1$ is open and closed in it,
and, as $S^a\to W$ is finite étale, its image in $W_0$ is open and
closed. The map from it is injective, hence an isomorphism onto its
image, and this locus is $S_0$ by the description of the image. At
these points $1=a^{e-2}=\theta G'(0)$.
(3) The root of a point of $S$ is simple, because $U$ avoids
$E_{\mathrm{crit}}$.
\end{proof}

\begin{remark}
When $S$ is isomorphic to a plane, Proposition~\ref{prop:sector}
supplies the Kummer deck transformation of $S\setminus S_0$, its exact
fibre counts, and the zero branch $S_0$. The case $m_S=1$, a completely
split marking, makes $S$ birational to $W$; this always happens when
$e=3$, as in the balanced example above. For $e\le2$ there is nothing to
add. For $e=2$ a constant cofactor would give $\theta G'(t)=a^0=1$ at
both roots of the irreducible quadratic $G$, whereas $G'$ takes opposite
values at the two roots. For $e=1$ the marking is determined by
$a^{-1}=\theta G'(t)$, so $S$ has degree one over $\Gamma$.
\end{remark}

\section{Stable comparison and specialization}\label{sec:comparison}

Two polynomial maps are \emph{stably left--right equivalent} if, after
adjoining enough identity coordinates to each, polynomial automorphisms
of source and target identify them. Their generic field extensions are
then isomorphic after adjoining independent transcendental variables.
The normal closure and its permutation action on the source embeddings
are unchanged by such an extension. Generic permutation monodromy is
therefore a stable invariant.

The finite normalization together with its distinguished affine source
open is also intrinsic. Indeed, if $\mathcal B$ is the integral
closure of a target coordinate ring $\mathcal A$ in the source function
field $\mathcal L$, then $\mathcal B[s]$ is the integral closure of
$\mathcal A[s]$ in $\mathcal L(s)$. It is normal and integral over
$\mathcal A[s]$; conversely, an element integral over $\mathcal A[s]$
is integral over $\mathcal B[s]$ and hence belongs to it. This observation, also used in
the normalization framework of \cite{VanRijn2026NormalizationFunctoriality},
transports the boundary and its ramification and residue degrees under
stable equivalence.

\begin{proposition}\label{prop:weighted}
Let a degree-$N$ map to $\A^3_{A,B,C}$ have source function field
generated over $\C(A,B,C)$ by an element $w$ with minimal polynomial
\[
 \mathcal H(w)-BCw+cAC^2,
 \qquad \mathcal H\in\C[w],\quad \deg\mathcal H=N\ge2,\quad c\in\C^*.
\]
Its generic monodromy over $\C(A,B,C)$ is $S_N$.
\end{proposition}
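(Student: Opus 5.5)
We must show that the Galois group of $F(w)=\mathcal H(w)-BCw+cAC^2$ over $k=\C(A,B,C)$ is $S_N$. The plan is to specialize to a suitable line or plane in the target and find enough inertia there. Irreducibility over $k$ is immediate, since $F$ is linear in $A$ with coefficient $cC^2\neq0$. We will then produce a transposition and an $N$-cycle inside the monodromy, and show the group is primitive. A primitive group containing a transposition is symmetric. Alternatively, a transitive group generated by transpositions is symmetric, as in Lemma~\ref{lem:morse}.

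\emph{Reduction to a generic polynomial.} First fix $B$ and $C$ at generic complex values $B_0$ and $C_0\neq0$. Then $F=g(w)+cA C_0^2$ with $g(w)=\mathcal H(w)-B_0C_0w$, and the roots over the $A$-line are the fibres of $g:\A^1\to\A^1$ over $-cAC_0^2$. The monodromy of this specialization embeds in the generic monodromy, as in the proof of Theorem~\ref{thm:monodromy-main}. It therefore suffices to show that $g$ is Morse for generic $B_0C_0=\beta$. In that case the argument recalled in Lemma~\ref{lem:morse} gives $S_N$: loops around simple critical values give transpositions, and connectedness of $\A^1$ gives transitivity.

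\emph{Genericity of Morse.} This is the key step. The critical points of $g_\beta=\mathcal H-\beta w$ satisfy $\mathcal H'(w)=\beta$.

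For simplicity of critical points, the multiple ones occur only where additionally $\mathcal H''(w)=0$. These are finitely many points $w$, hence finitely many excluded $\beta$. This requires $\mathcal H''\not\equiv0$, which holds since $N\ge2$. In the edge case $N=2$, $g_\beta$ is quadratic and automatically Morse.

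For distinct critical values, parametrize the critical locus by $w$ itself, with $\beta=\mathcal H'(w)$. The critical value is then $\phi(w)=\mathcal H(w)-w\mathcal H'(w)$. Two distinct critical points $w_1\neq w_2$ over the same $\beta$ with equal values would mean that the curve
\[
\{(w_1,w_2):\ \mathcal H'(w_1)=\mathcal H'(w_2),\ \phi(w_1)=\phi(w_2),\ w_1\neq w_2\}
\]
dominates the $\beta$-line. Equivalently, for generic $\beta$ the line of slope $\beta$ would be tangent to the graph of $\mathcal H$ at two points. A polynomial graph has only finitely many bitangent lines unless $\mathcal H$ is special. Indeed, $\phi'(w)=-w\mathcal H''(w)$, so a one-parameter family of coincidences would force $\phi$ to factor through $\mathcal H'$. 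Lüroth-type composition arguments then show $\mathcal H(w)=\alpha+\gamma w+R(w)$ with $R$ invariant under a nontrivial affine symmetry.

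This step is the main obstacle: such symmetric $\mathcal H$, for example $w^N$, may genuinely fail to be Morse for every $\beta$. For $\mathcal H=w^N$ the critical points $w^{N-1}=\beta/N$ have values $(1-N)w\beta/N$, which are distinct, so this case is fine. But the general argument needs care.

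\emph{Fallback using the extra parameter.} If generic Morse fails, use the full two-parameter family instead. Set $A=0$ and work over $\C(B,C)$, using the valuation $C\to0$. There $F\equiv\mathcal H(w)$, and near $C=0$ the term $-BCw$ perturbs the roots. Alternatively, one directly checks that the discriminant of $F$ with respect to $w$, viewed over the $A$-line for generic $(B,C)$, has simple zeros. That gives transpositions, and transitivity from irreducibility then yields $S_N$.
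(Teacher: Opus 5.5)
There is a genuine gap, and it sits exactly at the step that carries the proof. Your reduction is sound and matches the paper's: on a line with $B,C$ fixed ($C_0\ne0$) and $A$ varying, the roots are the fibres of $g_\beta=\mathcal H-\beta w$. If some $g_\beta$ is Morse, the transpositions-plus-transitivity argument of Lemma~\ref{lem:morse} gives $S_N$. But you never show that $g_\beta$ is Morse for any $\beta$. You call this ``the main obstacle'' and suggest that symmetric $\mathcal H$ might fail to be Morse for every $\beta$. That suggestion is false: the linear term $-\beta w$ breaks any such symmetry.

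Your fallback also proves nothing new. Over the $A$-line, the discriminant of $F$ in $w$ is a nonzero constant times $\prod_{g_\beta'(\xi)=0}\bigl(g_\beta(\xi)+cAC_0^2\bigr)$, with critical points counted with multiplicity. So ``simple zeros of the discriminant'' is exactly the Morse condition restated. The $C\to0$ valuation is never turned into an argument. As written, the proposal does not prove the statement.

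There are two ways to close the gap.

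\emph{The paper's rescaling.} Put $\beta=-h_N\rho^{N-1}$ and $w=\rho z$, where $h_N$ is the leading coefficient of $\mathcal H$, and divide by $h_N\rho^N$. This turns $g_\beta$ into $z^N+z+O(\rho^{-1})$. The limit $z^N+z$ is Morse: its critical points, $z^{N-1}=-1/N$, are simple, and its critical values $(N-1)z/N$ are distinct. The Morse property is open in the coefficients and invariant under affine rescaling, so $g_\beta$ is Morse for $|\rho|$ large.

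\emph{Your own bitangent route.} Take a component of $\{\mathcal H'(w_1)=\mathcal H'(w_2),\ \phi(w_1)=\phi(w_2)\}$ (with $w_1\ne w_2$ generically) that dominates the $\beta$-line. Since $\phi'=-w\mathcal H''$, you get $d\phi(w_i)=-w_i\,d\beta$. Equating the two gives $(w_1-w_2)\,d\beta=0$ with $d\beta\ne0$ generically, so $w_1=w_2$, a contradiction. No Lüroth or symmetry analysis is needed. Combined with your simple-critical-point argument, this shows the non-Morse set of $\beta$ is finite for every $\mathcal H$ of degree $N\ge2$.
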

\begin{proof}
On $C\ne0$, put $B'=BC$ and $A'=-cAC^2$. Together with $C$ these are
independent coordinates, so the polynomial becomes $\mathcal H(w)-B'w-A'$,
with the extra independent variable $C$. If $h_N$ is the leading
coefficient of $\mathcal H$, set $B'=-h_N\rho^{N-1}$ and $w=\rho z$, and
divide by $h_N\rho^N$. As $\rho$ tends to infinity, $\mathcal H(w)-B'w$
tends in these coordinates to $z^N+z$, which is Morse. So for some value
of $B'$ the polynomial $\mathcal H(w)-B'w$ is
Morse, and, as in Lemma~\ref{lem:morse}, the connected cover of the
$A'$-line has monodromy generated by transpositions, giving $S_N$. The
extra transcendental variable does not change that group.
\end{proof}

The antiderivative presentation and its full symmetric monodromy are
Theorems A--B of \cite{MikhailSzh2026WeightedLiftGalois}; see also
\cite{VanRijn2026MarkedRootFramework}. The proof is included to make
the comparison independent of a classification of other constructions.
For $n\ge4$, $G_n$ is imprimitive on $N=nm$ letters, whereas $S_N$
is primitive. Proposition~\ref{prop:weighted} and stable invariance
prove Corollary~\ref{cor:stable-main}. The argument distinguishes
single weighted lifts. It makes no assertion about their compositions,
or about all cancellation or quadratic-gauge maps.
The use of this stable invariant, including an explicit imprimitive
composition example, precedes this paper
\cite{VanRijn2026ImprimitiveFactorization}.

\subsection{An exact overlap with power-weighted lifts}\label{sec:power-overlap}
The power-weighted lifts of \cite{Annie2026PowerWeightedLifts} generalize
the ordinary weighted lifts of Proposition~\ref{prop:weighted}; when the
weight parameter is greater than one, their source field has an
additional cyclic stage, so Proposition~\ref{prop:weighted} does not
apply to them.
The degree-eight map $F_{2,3}=(F_1,F_2,F_3)$ of
\cite{Annie2026PowerWeightedLifts} can be written, with
$\alpha=x^2y$, as
\[
\begin{aligned}
F_1&=z(1+\alpha)^4+\frac{xy^2}{3}(5\alpha^3+17\alpha^2+20\alpha+8),\\
F_2&=4xz(1+\alpha)^3+\frac{y}{3}(20\alpha^3+48\alpha^2+33\alpha+2),\\
F_3&=-x^4z+\frac{x}{3}(3-5\alpha).
\end{aligned}
\]
Use the source coordinates $(x,u,w)=(x,s_3,s_4)$ of $E_{4,0}$ from
Section~\ref{sec:descent}, with outputs $(r,q,b)$.
Direct substitution gives
\begin{equation}\label{eq:power-overlap}
\begin{gathered}
 E_{4,0}\circ\Psi=\Lambda\circ F_{2,3},\\
 \Psi(x,y,z)=\left(x,y,-2z-4x^3y^3-\frac{20}{3}xy^2\right),
 \qquad \Lambda(A,B,C)=(-B,3A,C).
\end{gathered}
\end{equation}
Both are polynomial automorphisms: the third coordinate of $\Psi^{-1}$
is $-w/2-2x^3u^3-(10/3)xu^2$, and $\Lambda^{-1}(r,q,b)=(q/3,-r,b)$. Thus
the equivalence requires no stabilization. The determinants
$\det J\Psi=-2$ and $\det\Lambda=3$ are consistent with
$\det JE_{4,0}=1$ and $\det JF_{2,3}=-2/3$, since $1\cdot(-2)=3\cdot(-2/3)$.
The exact verification supplement checks every component of
\eqref{eq:power-overlap} against the recursive descent construction.

Consequently the degree-eight map $E_{4,0}$ is already present in the literature,
and the order-$192$ group computed here also describes that map.
No equivalence with higher power-weighted members or with the nonzero
parameter descents is asserted. In particular the ordinary-weighted
distinction cannot be strengthened to exclude all power-weighted lifts.

\subsection{Nonlinear sections}
The results above hold for every fixed coordinate parameter because
the polynomial charts and the Morse line were proved uniformly.
For a general nonlinear target subvariety $\Gamma\to Y$,
Proposition~\ref{prop:base-change} still describes the preimage
$U_\Gamma$ exactly, as an open subset of $Z\times_Y\Gamma$. But this
base change can be reducible or singular, normalization need not
commute with it, and a component of $U_\Gamma$ need not be an affine
space. Proposition~\ref{prop:sector} identifies such a component with
an open subset of a Kummer cover of the simple-root locus of one
factor; deciding whether it is an affine space requires further
geometric input.

The standard local cyclic model illustrates how degrees can drop. A
cover $a^m=s$ pulled back by $s=z^\ell$ becomes $a^m=z^\ell$. If
$g=\gcd(m,\ell)$, there are $g$ branches over $\C$, and the
normalization of each branch has degree $m/g$ over the $z$-line.
For $\ell=m$ this cyclic stage splits into degree-one branches.
This explains a possible drop in component degree, but does not
make the entire marked-root map injective or give a polynomial
affine-space chart for the new source.

\subsection{Further questions}
The generic group, boundary count, Picard rank and homotopy type are
constant across the fixed-coordinate parameters, so they cannot
distinguish the maps $E_{n,\lambda}$ from one another up to stable
equivalence. The actual maps
of the boundary divisors and the intersections of their images provide
finer candidate invariants. Other questions include monodromy on
nonlinear target graphs and the behaviour of their normalized boundary
under tangential specialization. These require arguments beyond the
coordinate-descent theorem.

\section{Exact verification and scope}\label{sec:verification}

The proofs establish the uniform statements. Symbolic computations
provide independent checks of formulas, finite examples and their
implementation; they are not substitutes for proofs in unbounded
dimension or for the topological gluing argument.

The accompanying scripts perform the following focused checks:
\begin{enumerate}
\item reconstruction on the finite, zero and infinity charts, together
with all multiplicity partitions in a finite range;
\item the first polynomial coordinate chart and its inverse, the
completion lemma, iterated descents through $n=8$, an all-parameter
$n=5$ calculation, and a transported collision;
\item the derivative-product identity, the integer Smith form of the
pair-valuation matrix, exact abstract permutation-group orders, and
Morse critical-value checks;
\item the two normalization charts, their overlap, the ramification
determinant, substitution of original source maps, and cyclic
specialization controls;
\item the polynomial identity with the published power-weighted
degree-eight map, its inverse source chart, determinant and collision;
\item the homogeneous derivative identity $(\partial_1P_h)(x,p)=p$ for
$n\le8$, the balanced-factor resultant identity, and numerical
reconstruction of every source point over random factorized targets,
together with $p=H_h(x,p)\,\partial_1G_h(x,p)$ at each of them
(Section~\ref{sec:sectors});
\item the identities derived in the text of Sections~\ref{sec:root}
and~\ref{sec:descent}: Lemma~\ref{lem:shape} for $n\le9$, the two
determinant factors in Proposition~\ref{prop:harish}, the $n=3$
example, the first chart and its boundary offsets for $n\le8$, the
explicit map $E_{4,0}$, the model case of Lemma~\ref{lem:completion},
and, for
Proposition~\ref{prop:induction}, the dependence of the offsets and the inverse-boundary
formula~\eqref{eq:induction-offset} for $n=5,6$, and the slopes $-\frac23,-\frac32,\frac23,\frac32$ for $n=8$.
\end{enumerate}
The scripts use only SymPy. The runner \texttt{run\_all.py} executes all
of them and writes a manifest with their hashes, exit statuses and run
times; a complete run takes about thirteen minutes.

\paragraph{Data availability.} The scripts, the runner and a reference
run form a self-contained supplement under the Apache License 2.0. Its
README maps each script to the statements it checks. The supplement is
distributed as the ancillary files of this paper on arXiv.

\section*{Acknowledgments}
AI assistants from Anthropic and OpenAI significantly contributed to writing
the manuscript, to the formal derivation and checking of its results, to the
verification scripts, and to revising the text.
\bibliographystyle{plain}
\bibliography{references}
\end{document}